\numberwithin{equation}{section}
\theoremstyle{plain}
  \newtheorem{theorem}{Theorem}
  \newtheorem{lemma}{Lemma}
  \newtheorem{corollary}{Corollary}
  \newtheorem{proposition}{Proposition}
  \newtheorem{assumption}{Assumption}
  \newtheorem{remark}{Remark}
  \newtheorem{definition}{Definition}
\newcommand{\dt}{{\tt dt}}
\newcommand{\Next}{\mathsf{X}}
\newcommand{\U}{\;\mathsf{U}\;}
\newcommand{\Un}{\;\mathsf{U}^{\leq n}\;}
\newcommand{\AP}{\texttt{AP}}
\newcommand{ \pr}{\mathsf{P}}
\def\e{\mathrm{e}}
\def\B{\mathbb{B}}
\def\P{\mathbb{P}}
\def\R{\mathbb{R}}
\def\N{\mathbb{N}}
\def\d{\mathrm{d}}
\def\f{\mathcal{F}}
\def\e{\mathcal{E}}
\def\i{\mathcal{I}}
\def\j{\mathcal{J}}
\def\x{\mathbf{x}}
\def\b{\mathcal{B}}
\def\ve{\varepsilon}
\newcommand{\las}{\mathrm{l.a.s.}}
\title[Infinite-horizon specifications over Markov processes]{Characterization and computation of infinite-horizon specifications over Markov processes}
\begin{document}

\author{Ilya Tkachev and Alessandro Abate}
\thanks{
\hspace{-0.6cm}
I. Tkachev is with the Delft Center for Systems \& Control, Delft University of Technology, The Netherlands. Email: \texttt{i.tkachev@tudelft.nl}. \\
A. Abate is with the Department of Computer Sciences, University of Oxford, United Kingdom,
and with the Delft Center for Systems \& Control, Delft University of Technology, The Netherlands. Email: \texttt{alessandro.abate@cs.ox.ac.uk}.
}

%\begin{center}
%  Ilya Tkachev $\quad\quad\quad$ Alessandro Abate\\[4mm]
%  Delft University of Technology\\[4mm]
%  i.tkachev@tudelf.nl $\quad\quad\quad$ a.abate@tudelft.nl
%\end{center}

\begin{abstract}
  This work is devoted to the formal verification of specifications over general discrete-time Markov processes, with an emphasis on infinite-horizon properties.
  These properties, formulated in a modal logic known as PCTL, can be expressed through value functions defined over the state space of the process.
  The main goal is to understand how structural features of the model (primarily the presence of absorbing sets) influence the uniqueness of the solutions of corresponding Bellman equations. Furthermore, this contribution shows that the investigation of these structural features leads to new computational techniques to calculate the specifications of interest: the emphasis is to derive approximation techniques with associated explicit convergence rates and formal error bounds.  \\[1ex]
\smallskip
\noindent \textbf{Keywords:}
discrete-time Markov processes,
PCTL model checking,
infinite-horizon properties,
Bellman equations,
absorbing sets.
\end{abstract}

\maketitle

%%%%%%%%%%%%%%%%%%%%%%%%%%%%%%%%%%%%%%%%%%%%%%%%%%%%%%%%%%%%%%%%%%%%%%%%%%%%%%%%
\section{Introduction}

The use of formal verification notions and methods for dynamical systems has recently become an
active inter-disciplinary area of research in systems and control theory \cite{t2009}.
One of the most efficient techniques is model-checking,
which
aims at determining the satisfaction set of a given specification,
i.e. the set of all states that initialize realizations verifying that specification.
Probabilistic Computation Tree Logic (PCTL) is a modal logic which is widely used in formal verification
to express specifications for discrete-time probabilistic processes \cite[Chapter 10]{bk2008}.
The special case of discrete-time Markov Chains (\dt-MC) -- models over discrete (countable) spaces -- is well-studied in the literature and PCTL specifications can be verified over these models in an automatic manner by employing computationally advantageous probabilistic model checking techniques \cite{HKNP06,kkz2005}.
PCTL model checking has also been validated over numerous compelling applications \cite{fknp2011}.

The formal extension of PCTL to discrete-time Markov processes (\dt-MP) over general (uncountable) state spaces has recently been discussed in \cite{h2005, rcsl2010}.
The latter work in particular has expressed the satisfaction set of a given PCTL specification as the level set of an associated state-dependent value function,
and has further characterized the computation of such value function via dynamic programming (DP) \cite{bs1978}. Within PCTL, there is a clear distinction between finite-horizon specifications (the satisfiability of which depends on finite realizations of the system) and infinite-horizon specifications (those characterized over infinite paths). In the context of \dt-MC with a finite state space, DP over a finite horizon is performed by iterative matrix multiplications, whereas DP over an infinite horizon is reduced to solving systems of linear equations.
On the other hand, over a general state space the corresponding procedures
-- namely Bellman iterations and Bellman equations -- involve integral operators.
Recent work (see e.g. \cite{APLS08b}) has shown that explicit analytical solutions over uncountable state-spaces are not to be found in general,
and has stressed the need for methods to compute value functions with any given precision.

In the context of \dt-MP,
the work in \cite{h2005} has put forward finite abstractions,
% \cite{P2003}\footnoteor{Is it a proper reference, with a view that it is now a CS submission?},
where measures are approximated by monotone functions of sets.
Although these abstractions are sound and upper and lower bounds for the expression of value functions have been derived \cite[Theorem 33]{h2005},
no method to tune them has been provided.
Also, their tightness, usefulness, or possible triviality (i.e. conditions for the error bounds to be less than $1$) have not been addressed.
The work in \cite{rcsl2010}, in turn, has characterized PCTL specifications and their associated value functions with an emphasis on the issue of uniqueness of solutions of the related Bellman equations.
The following questions have been left open to investigation:
\begin{enumerate}
  \item how to compute finite-horizon value functions in PCTL with any given precision;
  \item since in general value functions are not known exactly and satisfaction sets are expressed as level sets of these functions, how to verify nested PCTL formulae (namely, specifications where the satisfaction set for the first formula appears in the definition of a second one);
  \item how to verify infinite-horizon PCTL specifications, particularly if the sufficient conditions for the uniqueness of solutions of Bellman equations in \cite{rcsl2010} are not satisfied.
\end{enumerate}

With focus on 1), finite-horizon computations have recently received considerable attention.
For discrete-time Stochastic Hybrid Systems (a class of \dt-MP),
the work in \cite{aklp2010} has put forward finite abstraction techniques to perform DP iterations over corresponding finite state-space \dt-MC.
These results have been further sharpened in \cite{sa2011},
where abstractions by state-space partitioning are obtained adaptively, in accordance to a specification-dependent error.
In both works the explicit abstraction error grows linearly with the time horizon of the corresponding PCTL specification, which does not allow applying the developed methods directly to the verification of infinite-horizon properties.

The contribution of this work is hence focused on questions 2) and 3) and is twofold: the first goal is to complete the formal discussion on general state-space PCTL verification by dealing with nested formulae; the second goal (and the main task of this work) is to provide both analysis and computational tools for infinite-horizon PCTL specifications under conditions on the model that are as weak as possible and that are easy to verify.

In order to address question 2), we introduce the concepts of sub- and super-satisfaction sets for PCTL specifications,
the characterization of which requires only approximate knowledge of the corresponding value functions.
Specifically, we show how the sub- and super-satisfaction sets of a nested sub-formula propagate to the corresponding sets of the main formula:
this is achieved by using monotonicity properties of corresponding value functions.

In order to tackle question 3), we extend and generalize recent results in \cite{ta2011,ta2012}, showing that the sufficient condition provided in \cite{rcsl2010} for the uniqueness of the solution of a Bellman equation is only satisfied if the solution is trivial in some sense. We further show that a weaker version of this condition is both necessary and sufficient if the \dt-MP admits certain continuity properties.
This result leads to novel techniques to solve Bellman equations whenever their solution is not unique,
and provides approximation techniques with associated explicit convergence rates and error bounds.
These techniques are based on the reduction of the infinite-horizon problem to a finite-horizon one,
for which computational methods available in the literature \cite{aklp2010,sa2011} can be directly applied.
We furthermore discuss the relationship between the issue of uniqueness of solution and the presence of absorbing sets over the (uncountable) state space:
absorbing sets are shown to play a fundamental role for both the characterization and the computation of infinite-horizon PCTL properties.

The contribution is organized as follows. Section \ref{sec:pctl} introduces discrete-time Markov processes and PCTL specifications, and discusses the verification of nested PCTL formulae. Section \ref{sec:ver} dives in depth into infinite-horizon problems. Section \ref{sec:cs} provides two case studies to discuss the results and finally Section \ref{sec:concl} concludes the work.

Throughout the article we use tools of measure theory and of functional analysis. The following references can be consulted: \cite{D04} for probability theory,
\cite{r1984} for Markov processes and \cite{r1987} for functional analysis and measure theory.

%%%%%%%%%%%%%%%%%%%%%%%%%%%%%%%%%%%%%%%%%%%%%%%%%%%%%%%%%%%%%%%%%%%%%%%%%%%%%%%%
\section{Markov processes and PCTL}\label{sec:pctl}

%%%%%%%%%%%%%%%%%%%%%%%%%%%%%%%%%%%%%%%%%%%%%%%%%%%%%%%%%%%%%%%%%%%%%%%%%%%%%%%%
\subsection{Discrete-time Markov processes}

Let $(X,\b)$ be some measurable space and let $P:X\times \b \to [0,1]$ be a stochastic kernel, so that $P(\cdot,B)$ is a non-negative measurable function for any set $B\in \b$ and $P(x,\cdot)$ is a probability measure on $(X,\b)$ for any $x\in X$. The space of trajectories is denoted by $\Omega := X^{\N_0}$ (here $\N_0 := \N \cup\{0\}$) and its product  $\sigma$-algebra by $\f$.
It follows from \cite[Theorem 2.8]{r1984} that there exists a unique discrete-time Markov process (\dt-MP) $\x = (\x_n)_{n\geq 0}$ with the transition kernel $P$,
that is,
for any $x\in X$ there exists a unique probability measure $ \pr_x$ on $(\Omega,\f)$ such that $ \pr_x(\x_0 = x) = 1$, and for any measurable set $B\in \b$ and any time epoch $n\geq 0$
\begin{equation}\label{eq:Markov}
     \pr_x(\x_{n+1}\in B|\x_0,\x_1,\dots,\x_n) =  \pr(\x_{n+1}\in B|\x_n) = P(\x_n,B).
\end{equation}
Equation \eqref{eq:Markov} characterizes the Markov property and it indicates that the future of the process $\x_{n+1}$ is independent of its past history $(\x_0,\dots,\x_{n-1})$, given its current value $\x_n$. As a result, any \dt-MP can be characterized equivalently by the triple $(X,\b,P)$.

A familiar class of \dt-MP is that of stochastic dynamical systems.
If $(\xi_n)_{n\geq 0}$ is a sequence of iid random variables and $f:X\times \R\to X$ is a measurable map, then
\begin{equation}\label{eq:dynamical}
  \x_{n+1} = f(\x_n,\xi_n),\qquad \x_0 = x \in X,
\end{equation}
is always a Markov process characterized by a kernel
\begin{equation*}
  Q(x,A) := \nu(\{\xi\in \R:f(x,\xi)\in A\})
\end{equation*}
where $\nu$ is the distribution of $\xi_0$.
Conversely, under some mild conditions on the structure of the state space,
any \dt-MP $X$ admits a dynamical representation as in \eqref{eq:dynamical},
for an appropriate choice of the function $f$ \cite[Proposition 8.6]{k1997a}.
However, theoretical studies of \dt-MP,
as well as the current article,
usually employ the representation via stochastic kernels.

The reader interested in further discussions about modeling aspects of \dt-MP is referred to \cite[Appendix A1]{m2008}.
Among other models related to \dt-MP, Labeled Markov Processes (LMP) \cite{dgjp04} are of interest as they embed non-determinism and allow for sub-stochastic transition kernels.

%%%%%%%%%%%%%%%%%%%%%%%%%%%%%%%%%%%%%%%%%%%%%%%%%%%%%%%%%%%%%%%%%%%%%%%%%%%%%%%%
\subsection{Probabilistic Computation Tree Logic (PCTL)}

PCTL is a modal logic employed to characterize classes of temporal properties of \dt-MC \cite{bk2008} and of \dt-MP \cite{h2005,rcsl2010}.
Properties are expressed as formulae in PCTL and are constructed according to the grammar of this logic.
The grammar is based on $\AP$, the set of \textit{atomic propositions}, which can be thought of as tags or labels associated to the states.
Let $A\in \AP$ and $x\in X$; we write $x\models A$ if the atomic proposition $A$ is valid at state $x$.
Since there is no substantial difference between $A$ and its \textit{satisfaction set} $\{x\in X:x\models A\}\subseteq X$, we define atomic propositions to be measurable subsets of $X$, or equivalently $\AP\subseteq \b$, and require that  $X\in \AP$.
The grammar of PCTL is defined as follows.
Atomic propositions are basic formulae that are used to build more complex formulae via logical rules.
PCTL \textit{state} \textit{formulae} are subsets of $X$, whereas \textit{path formulae} are subsets of $\Omega$. More precisely:
\begin{itemize}
  \item each atomic proposition $A\in \AP$ is a formula with $A$ itself as its satisfaction set;
  \item if $A$ and $B$ are formulae, then so are $\neg A$ and $A\wedge B$;
  \item if $ P hi$ is a path formula and $p\in [0,1]$, then $\P_{\bowtie p}[ P hi]$ is a (state) formula, where $\bowtie$ can be any symbol from the collection
   $\{\leq,<,\geq,>\}$;
  \item if $A$ and $B$ are formulae and $n\in \N_0$, then $\Next\, A$, $A \Un B$, and $A \U B$ are path formulae.
\end{itemize}
The semantics of PCTL state formulae is given as follows:
\begin{align*}
    x \models X  &\quad {\rm for\ all} \quad  x \in X\\
    x \models A                                & \quad \Leftrightarrow \quad  x \in A \\
    x \models \lnot A                  & \quad \Leftrightarrow \quad  x \in A^c := X\setminus A\\
    x \models A \land B        & \quad \Leftrightarrow \quad  x \in A \cap B\\
    x \models \P_{\bowtie p}[ P hi]            & \quad \Leftrightarrow \quad   \pr_x(  P hi ) \bowtie p
\end{align*}
With regards to path formulae,
the meaning of $\Next\, A$ (the \textit{next} operator) is $\x_1 \in A$,
thus $x\models \P_{\bowtie p}[\Next A]$ if and only if $P(x,A)\bowtie p$.
The two additional path formulae depend on the \textit{bounded until} operator $\Un$ and on the \textit{unbounded until} operator $\mathsf U$.
In order to characterize them through subsets of $\Omega$, let us introduce for any set $A\in \b$
\begin{equation*}
    \tau_A := \inf\{n\geq 0: \x_n\in A\}
\end{equation*}
to be the first hitting time of a set $A$ over a realization $\x_0,\x_1,\dots$. Clearly, $\tau_A$ is a random variable with values in $\N_0\cup\{\infty\}$. We define $A \Un B := \{\tau_{B} \leq \tau_{A^c},\tau_B\leq n \}$,
so that $A \Un B \in \f$ whenever $A,B\in \b$,
which means that the path formula is satisfied over a trajectory for which $B$ holds at least once within the $n$-step horizon, while $A$ is persistently valid until that moment.
Similarly, for the infinite-horizon case, we define
\begin{equation*}
  A\U B := \{\tau_{B}\leq\tau_{A^c},\tau_B<\infty\}.
\end{equation*}

To characterize satisfaction sets for until operators, we introduce the so called \textit{reach-avoid}\footnote{Alternatively known as constrained reachability \cite{bk2008}.} value functions:
for any $A,B\in \b$, let us define
\begin{equation*}
    w_n(x;A,B) :=  \pr_x\left(A \Un B\right),\quad w(x;A,B) :=  \pr_x\left(A \U B\right),
\end{equation*}
which leads to expressing $\P_{\bowtie p}[A \Un B] = \left\{x\in X:w_n(x;A,B)\bowtie p\right\}$.
Functions $w_n,w$ are measurable, thus all PCTL formulae are well-defined measurable subsets of $X$ and all path formulae are elements of $\f$ \cite{rcsl2010}.\footnote{Although the theory in \cite{rcsl2010} has been developed for models with $X$ carrying a topological structure, all the results on measurability hold without this requirement and as such they are also valid in the present instance.
This work resorts to a topological structure over the state space only in Section \ref{ssec:abs}.}

Let us provide a few examples: if $A,B$ are PCTL formulae, then $\P_{\leq0.05}\left[A \U \P_{<1}[\Next\, B]\right]$ is a PCTL formula. Likewise, $A\Rightarrow \P_{\geq 0.95}[A\U B]$ is a PCTL formula, since $A\Rightarrow B = \neg A\vee B$ and $A\vee B = \neg(\neg A\wedge\neg B)$. However, $\P_{>0}\left[(\Next\, A)\wedge (B\U C)\right]$ is not a PCTL formula, since the logical operation $\wedge$ is defined for state formulae but not over path formulae. Furthermore, PCTL path formula $\lozenge^{\leq n} A:=X \Un A = \{\tau_A\leq n\}$ is known as a \textit{reachability} event for a given set $A$ and relates to a wide and important class of problems in systems and control \cite{APLS08b}. Its dual, the \textit{invariance} (or \textit{safety}) event $\square^{\leq n} A := \neg \left(\lozenge^{\leq n} A^c \right)=\{\tau_{A^c} >n\}$, cannot be directly expressed in PCTL since the negation of path formulae is not allowed. On the other hand,
\begin{equation*}
     \pr_x(\tau_{A^c}>n) = 1-w_n(x;X,A^c),
\end{equation*}
thus one can define $\P_{\bowtie p}\left[\square^{\leq n} A\right] = \P_{\bowtie'1-p}\left[X\Un A^c\right]$,
where the symbol $<'$ stands for $\geq$, the symbol $\leq'$ stands for $>$ etc.
We denote the invariance value functions by
\begin{equation}\label{eq:u-w}
u_n(x;A) := 1-w_n(x;X,A^c),\quad u(x;A) := 1-w(x;X,A^c).
\end{equation}
The results for reach-avoid and invariance given in this work can thus be directly exported to the reachability property.
The latter represents also a crucial property for other types of logics, for instance linear temporal logic (LTL) \cite[Chapter 5]{bk2008}.
In particular, \cite{akm2011} has argued that the verification of a subclass of LTL specifications over a \dt-MP can be reduced to a reachability problem \cite[Theorem 4]{akm2011}.

%%%%%%%%%%%%%%%%%%%%%%%%%%%%%%%%%%%%%%%%%%%%%%%%%%%%%%%%%%%%%%%%%%%%%%%%%%%%%%%%
\subsection{Nested PCTL properties}\label{ssec:nest}

As mentioned in the introduction,
it is in general not expected that the value functions $w_n$ and $w$ can be expressed explicitly.
An alternative goal is the following \cite{aklp2010}:
given any precision level $\delta>0$, find approximate functions $\hat w_n$ and $\hat w$
such that $|\hat w_n(x)-w_n(x)|\leq \delta$ and $|\hat w(x) - w(x)|\leq \delta$,
for all $x\in X$.
Consider however the formula $\P_{\geq p_1}\left[A\U \P_{\leq p_2}[B\U C]\right]$:
if the value function $w(x;B,C)$ can only be characterized approximately,
what set should be considered to characterize $\P_{\leq p_2}[B\U C]$?
And how could this set be used in the parent formula?
To resolve this issue we need the following fact.
\begin{proposition}\label{prop:subsat}
    Let $A\subseteq A^*$ and $B\subseteq B^*$ be elements of $\b$ and let $n\in \N_0$. For all $x\in X$:
    \begin{equation*}
      w_n(x;A,B)\leq w_n(x;A^*,B^*),\quad w(x;A,B)\leq w(x;A^*,B^*).
    \end{equation*}
\end{proposition}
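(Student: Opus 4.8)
The plan is to prove both inequalities pointwise by establishing the corresponding inclusions of path events and then invoking monotonicity of the probability measure $\pr_x$. Since $w_n(x;A,B) = \pr_x(A \Un B)$ and $w(x;A,B) = \pr_x(A \U B)$, it suffices to show the event inclusions
\[
A \Un B \subseteq A^* \Un B^*, \qquad A \U B \subseteq A^* \U B^*,
\]
as subsets of $\Omega$; the claimed inequalities then follow at once because $\pr_x$ is monotone with respect to set inclusion.

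The key observation is a monotonicity property of hitting times: if $C \subseteq D$ are elements of $\b$, then $\{n \geq 0 : \x_n \in C\} \subseteq \{n \geq 0 : \x_n \in D\}$ along every trajectory, so taking infima (with the convention $\inf \emptyset = \infty$) gives $\tau_D \leq \tau_C$ pointwise on $\Omega$. I would apply this twice. First, from $B \subseteq B^*$ I obtain $\tau_{B^*} \leq \tau_B$. Second, from $A \subseteq A^*$ the complements satisfy $(A^*)^c \subseteq A^c$, hence $\tau_{A^c} \leq \tau_{(A^*)^c}$ --- note the reversal of inclusion under complementation, which is the one point where care is needed.

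With these two inequalities in hand the inclusions are routine. Fix a trajectory $\omega \in A \Un B$, so that $\tau_B(\omega) \leq \tau_{A^c}(\omega)$ and $\tau_B(\omega) \leq n$. Then $\tau_{B^*}(\omega) \leq \tau_B(\omega) \leq \tau_{A^c}(\omega) \leq \tau_{(A^*)^c}(\omega)$ and $\tau_{B^*}(\omega) \leq \tau_B(\omega) \leq n$, which is precisely the statement that $\omega \in A^* \Un B^*$. The infinite-horizon case is identical, replacing the constraint $\tau_B \leq n$ by $\tau_B < \infty$ and using $\tau_{B^*} \leq \tau_B < \infty$.

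I do not anticipate a genuine obstacle here: the whole argument reduces to the elementary monotonicity of hitting times and of probability measures. The only place demanding attention is the direction of the inequality for the avoid set, where enlarging $A$ shrinks its complement $A^c$ and therefore delays (rather than advances) the hitting time $\tau_{A^c}$; keeping this reversal straight is what ensures that the two hypotheses $A \subseteq A^*$ and $B \subseteq B^*$ both push the events in the same (enlarging) direction.
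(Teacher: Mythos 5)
Your proposal is correct and follows essentially the same route as the paper: the paper's proof simply asserts the event inclusion $\{\tau_{B} \leq \tau_{A^c},\tau_B\leq n \}\subseteq \{\tau_{B^*} \leq \tau_{(A^*)^c},\tau_{B^*}\leq n \}$ and concludes by monotonicity of $\pr_x$, while you additionally spell out the hitting-time monotonicity (including the complementation reversal) that justifies that inclusion. No gaps.
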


\begin{proof}
Since $\{\tau_{B} \leq \tau_{A^c},\tau_B\leq n \}\subseteq \{\tau_{B^*} \leq \tau_{(A^*)^c},\tau_{B^*}\leq n \}$ the proof immediately follows as the probability measure $ \pr_x$ is a monotonic function of sets for any $x\in X$.
\end{proof}

For a PCTL formula $A\in \b$, we say that $A_*$ ($A^*$) is a \textit{subsatisfaction} (\textit{supersatisfaction}) set if $A_*\subseteq A$ ($A\subseteq A^*$). Clearly, $A_*$ denotes a conservative set, the states of which also satisfy $A$, while $A^*$ denotes a relaxed set: any state in $(A^*)^c$ does not satisfy $A$ either\footnote{The approach here is similar to three-valued approximations in \cite{flw2006}, \cite[Section 4]{h2005}, and \cite{hpw2009}.}.

As done above, let $\hat w_n,\hat w$ denote some abstract $\delta$-approximations of $w_n$ and $w$, respectively.
Let us show as an example,
how the formula $\P_{\geq p_1}\left[A\U \P_{\leq p_2}[B\U C]\right]$ can be verified.
Since for all $x\in X$ it holds that
\begin{equation*}
    \hat w(x;B,C)-\delta\leq w(x;B,C)\leq \hat w(x,B,C)+\delta,
\end{equation*}
it follows that $\hat w(x;B,C)\leq p_2-\delta$ implies $w(x;B,C)\leq p_2$, and that $\hat w(x;B,C)>p_2+\delta$ implies $w(x;B,C)>p_2$.
As a result, if we denote $D = \P_{\leq p_2}[B\U C]$,
then the sets
\begin{equation*}
    D_* := \{x\in X: \hat w(x;B,C)\leq p_2-\delta\},\quad D^* := \{x\in X: \hat w(x;B,C) \leq p_2+\delta\}
\end{equation*}
represent sub- and super-satisfaction sets for $D$. Finally, from Proposition \ref{prop:subsat} we obtain:
\begin{equation*}
    E_* := \{x\in X: \hat w(x;A,D_*)\geq p_1+\delta\}, \quad E^{*} := \{x\in X: \hat w(x;A,D^*) \geq p_1-\delta\},
\end{equation*}
which are sub- and super-satisfaction sets for $\P_{\geq p_1}\left[A\U \P_{\leq p_2}[B\U C]\right]$.
The application of this procedure over formulae including the operator $\Next$ is direct, since $P(x,\cdot)$ is a monotonic function of a set-valued argument for any $x\in X$.

A general algorithm for the verification of nested formulae follows: given the ability to approximately compute value functions with a precision $\delta$, find sub- and super-satisfaction sets for the sub-formulas on the lowest level (leaves) of a given formula tree, then use these sets to find sub- and super-satisfaction sets for higher-level formulae inductively, until the sub- and super-satisfaction sets for the given formula are found (at the root).
Note the similarity between our approach and the three-valued approximations of PCTL.

%%%%%%%%%%%%%%%%%%%%%%%%%%%%%%%%%%%%%%%%%%%%%%%%%%%%%%%%%%%%%%%%%%%%%%%%%%%%%%%%
\section{Verification of infinite-horizon PCTL specifications}
\label{sec:ver}
The goal of this section is to investigate the verification of infinite-horizon PCTL specifications and to provide methods to compute associated value functions with any given precision. For this purpose Section \ref{ssec:dp} introduces DP techniques to characterize the corresponding value functions via Bellman recursions and fixpoint equations,
points out related issues in their evaluation and provides sufficient conditions for the precise reduction of infinite-horizon problems to finite-horizon ones. In Section \ref{ssec:abs}, the concept of absorbing set is used to show that for a class of problems the aforementioned conditions are also necessary, and that they relate to the uniqueness of the solution of Bellman fixpoint equations. This result is further applied to derive methods to solve Bellman equations with non-unique solutions, both in the general case (which is done leveraging Lyapunov-like locally excessive functions -- cfr. Section \ref{ssec:decomp}), and in the special case of integral kernels (where such functions are not needed -- cfr. Section \ref{ssec:integral}).
The presented techniques depend on the characterization of absorbing sets,
which is discussed in Section \ref{ssec:v.o.s.}.
The obtained results are further compared with approaches in the literature on \dt-MP in Section \ref{ssec:connections}.

%%%%%%%%%%%%%%%%%%%%%%%%%%%%%%%%%%%%%%%%%%%%%%%%%%%%%%%%%%%%%%%%%%%%%%%%%%%%%%%%
\subsection{Dynamic programming and Bellman equations}\label{ssec:dp}

Let $\B$ denote the space of all real-valued, bounded and measurable functions on $X$. It is a Banach space with a norm given by $\|f\| := \sup_{x\in X}|f(x)|$ for $f\in \B$.
An operator $\j:\B\to\B$ is called linear if
\begin{equation*}
  \j(\alpha f+\beta g) = \alpha \j(f)+\beta \j(g)
\end{equation*}
for any constants $\alpha$, $\beta\in \R$ and functions  $f$, $g\in\B$. The quantity
\begin{equation}\label{eq:operator-norm}
    \|\j\| := \sup\limits_{\|f\|\leq 1}\|\j f\|
\end{equation}
is called the norm of the linear operator $\j$.
We say that a linear operator $\j$ is a contraction whenever it holds that $\|\j\|<1$.
An important example of a linear operator associated to a \dt-MP is the transition operator $ P :\B\to\B$,
which is induced by the kernel $P$.
The action of operator $ P $ on function $f\in \B$ is given by the following formula:
\begin{equation*}
   P  f(x) := \int_X f(y)P(x,\mathrm dy).
\end{equation*}
Let us furthermore introduce an invariance operator $\i_A$,
parameterized by a measurable set $A\in \b$,
and given by $\i_A f(x) = 1_A(x) P  f(x)$. Clearly, $\i_A$ is also a linear operator and $\i_X =  P $. Moreover, $\i_A$ is a monotone operator, which means that for all functions $f,g\in \B$ and any set $A\in \b$ it holds that $\i_A f(x)\leq \i_A g(x)$ for all $x\in X$ whenever it holds that $f(x)\leq g(x)$ for all $x\in X$.

As an abbreviation, for a function $g:X\to\R$ and a constant $\delta\in \R$ we further write $\{g\leq \delta\} := \{x\in X:g(x)\leq \delta\}$; a similar notation is used for any of the other symbols in the collection $\{<,\geq,>,=\}$.

Let us introduce a DP procedure for until-like specifications in PCTL.
Let $A,B\in \b$ be given sets (equivalently, state formulae in PCTL).
From \cite{rcsl2010,SL10} it follows:
\begin{equation}\label{eq:DP-w}
\begin{cases}
  w_{n+1}(x;A,B) &= 1_B(x)+\i_{A\setminus B} w_n(x;A,B),
  \\
  w_0(x;A,B) &= 1_B(x).
\end{cases}
\end{equation}
The computation in \eqref{eq:DP-w} involves iterations of the integral operator $\i_{A\setminus B}$.
Results in \cite{aklp2010,sa2011} allow one to compute a piece-wise constant function approximation $\hat w_n$,
which is such that $\|\hat w_n - w_n\|\leq \lambda n$,
where the constant $\lambda$ depends on the quality of the state space partitioning (see e.g. \cite[Theorem 4]{sa2011}).
Thus, in the remainder of this work we assume that finite-horizon problems can be solved approximately and with any given precision by any of the techniques given in the literature,
and instead focus on the reduction of infinite-horizon problems to finite-horizon ones.

For infinite-horizon problems, it holds that $w(x;A,B) = \lim_{n\to\infty} w_n(x;A,B)$, where the limit is point-wise non-decreasing \cite{rcsl2010}.
In \cite[Lemma 5]{rcsl2010} the monotone convergence theorem is applied to $w_n\to w$,
in order to show that the function $w$ solves the fixpoint Bellman equation
\begin{equation}\label{eq:Bellman-w}
w(x;A,B) = 1_B(x)+\i_{A\setminus B} w(x;A,B).
\end{equation}
However the convergence of $w_n\to w$ is not necessarily uniform.
Moreover, equation \eqref{eq:Bellman-w} may have multiple solutions:
since it is an affine equation,
if it does not have a unique solution then it admits infinitely many,
spanning an affine subspace of $\B$.
To further look into this issue we leverage value functions for invariance.
As discussed above,
the until specification can be used to express the invariance over a given set $A\in \b$.
Using formulae \eqref{eq:u-w} and \eqref{eq:DP-w} we obtain the following DP recursion
\begin{equation}\label{eq:DP-u}
\begin{cases}
  u_{n+1}(x;A) &= \i_{A} u_n(x;A),
  \\
  u_0(x;A) &= 1_A(x).
\end{cases}
\end{equation}
It easily follows that $u_n$ converges point-wise non-increasingly to function $u$, thus
\begin{equation}\label{eq:Bellman-u}
u(x;A) = \i_A u(x;A).
\end{equation}
Clearly, the verification of the invariance specification inherits issues of non-uniform convergence and of non-uniqueness of the Bellman equation \eqref{eq:Bellman-u} from the until specification in \eqref{eq:Bellman-w}.
However, the Bellman equation for the invariance specification has the advantage of being linear and thus always admits the trivial solution $u\equiv 0$.
Moreover, the analysis of the affine equation on a linear space can be reduced to the analysis of its homogeneous (linear) version:
dealing with \eqref{eq:Bellman-u} leads to finding methods for solving \eqref{eq:Bellman-w} as well.

\begin{remark}\label{rem:least-fixpoint}
There exists a least fixed-point characterization for the infinite-horizon value functions \cite[Lemma 6]{rcsl2010}: $w(x;A,B)$ is the least non-negative solution of \eqref{eq:Bellman-w}, i.e. if $f$ is any other non-negative solution of \eqref{eq:Bellman-w},
then $w(x;A,B)\leq f(x)$ for all $x\in X$. As a result, $u(x;A)$ is the
largest solution of \eqref{eq:Bellman-u} not exceeding $1$.
Although such characterization adds little to the computation of $u$ and $w$,
it results in the useful fact that $\|u\| = 1$ whenever $u$ is non-trivial, namely whenever $u$ is not identically equal to zero.
\end{remark}

One sufficient condition for the uniqueness of the solution of \eqref{eq:Bellman-u} is given as follows:
$\|u_1(\cdot,A)\|<1$ \cite[Proposition 7]{rcsl2010},
which in turn leads to the contractivity of the operator $\i_A$.
While this condition may be easy to check, it can be restrictive:
in this case \eqref{eq:Bellman-u} admits the unique solution $u\equiv 0$.
As a result, any invariance problem with a non-trivial solution will not satisfy this sufficient condition.
It follows that the weaker condition $\|u_n(\cdot,A)\|<1$, for some $n\geq 1$,
is also sufficient for the uniqueness of the solution of \eqref{eq:Bellman-u}. Let us introduce the quantities
\begin{equation*}
    m(A) := \inf\left\{m\geq0:\|u_m(\cdot,A)\|<1\right\}, \quad \rho(A) := \left\|u_{m(A)}(\cdot,A)\right\|,
\end{equation*}
for any $A\in \b$,
where we set $\rho(A) := 1$ if $m(A) = \infty$.
Note that both $m$ and $\rho$ are monotone functions on $\b$,
i.e. if $A\subseteq B$ are measurable sets,
then $m(A)\leq m(B)$ and $\rho(A)\leq \rho(B)$.
The quantity $m(A)$ is discussed in more detail for the special case of Markov Chains in Section \ref{ssec:integral}.

\begin{proposition}\label{prop:cmp}
    Let $A\in \b$ and denote for simplicity $m:=m(A)$ and $\rho := \rho(A)$. Then:
    \begin{itemize}
      \item[i.] if $m<\infty$,
      then $u(\cdot;A)\equiv 0$,
      and for all $n\geq 0$ it holds that $\|u_n(\cdot;A)\|\leq \rho^{\left\lfloor\frac{n}{m}\right\rfloor}$;
      \item[ii.] if $A,B\in\b$ are disjoint\footnote{In the following, for the sake of the simplicity the set-valued arguments of the reach-avoid value functions are assumed to be disjoint.
      This assumption does not affect the generality of the results, since $w(x;A,B) = w(x;A\setminus B,B)$ and hence any reach-avoid problem can be always considered as a problem on disjoint sets.} and $m<\infty$, then for all $n\geq 0$
        \begin{equation}\label{eq:w-bounds}
              0\leq w(x;A,B) - w_n(x;A,B)\leq \frac{m}{1-\rho}\rho^{\left\lfloor\frac{n}{m}\right\rfloor}.
        \end{equation}
    \end{itemize}
\end{proposition}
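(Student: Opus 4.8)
The plan is to prove both parts by turning the Bellman recursions into explicit operator series and exploiting the monotonicity and linearity of $\i_A$. Throughout I assume $m\geq 1$, since $m=0$ forces $\|1_A\|<1$, i.e. $A=\emptyset$, in which case both claims are trivial (and the symbol $\lfloor n/m\rfloor$ is anyway only meaningful for $m\geq 1$).

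For part i, I would first unfold the recursion \eqref{eq:DP-u} to get $u_n(\cdot;A)=\i_A^n 1_A$ for all $n\geq 0$. The key structural observation is that for $m\geq 1$ the function $u_m$ is supported on $A$ and bounded by its norm, so that pointwise $u_m\leq \rho\, 1_A$. Applying the monotone linear operator $\i_A^m$ to this inequality and using $\i_A^m 1_A = u_m$ gives $u_{2m}=\i_A^m u_m\leq \rho\, u_m\leq\rho^2 1_A$, and an induction yields $u_{km}\leq\rho^k 1_A$. For general $n=km+r$ with $0\leq r<m$ I would write $u_n=\i_A^r u_{km}\leq\rho^k\, u_r$ and bound $\|u_r\|\leq 1$ (invariance probabilities lie in $[0,1]$), which gives $\|u_n\|\leq\rho^{\lfloor n/m\rfloor}$. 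Since $\rho<1$ this bound tends to $0$, so the pointwise non-increasing convergence $u_n\to u$ forces $u\equiv 0$.

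For part ii, disjointness of $A$ and $B$ gives $A\setminus B=A$, so the recursion \eqref{eq:DP-w} also runs through $\i_A$. Unfolding it produces the Neumann-type series $w_n(\cdot;A,B)=\sum_{k=0}^{n}\i_A^k 1_B$ and $w(\cdot;A,B)=\sum_{k=0}^{\infty}\i_A^k 1_B$, so the difference is the nonnegative tail $w-w_n=\sum_{k=n+1}^{\infty}\i_A^k 1_B$, which immediately yields the lower bound $w-w_n\geq 0$. To bound it from above I would use $1_B\leq 1_X$ together with $\i_A 1_X = 1_A = u_0$, so that $\i_A^k 1_B\leq\i_A^k 1_X = u_{k-1}$ and hence $\|\i_A^k 1_B\|\leq\rho^{\lfloor(k-1)/m\rfloor}$ by part i. Summing and reindexing with $j=k-1$ gives $\|w-w_n\|\leq\sum_{j=n}^{\infty}\rho^{\lfloor j/m\rfloor}$, and grouping the exponents into consecutive blocks of length $m$ (on each of which $\lfloor j/m\rfloor$ is constant, contributing $m\rho^\ell$) bounds this tail by $\frac{m}{1-\rho}\rho^{\lfloor n/m\rfloor}$, which is the claimed estimate.

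I expect the main obstacle to be the block-geometric decay in part i: passing from the single-index hypothesis $\|u_m\|<1$ to the uniform rate $\rho^{\lfloor n/m\rfloor}$ requires combining three facts about $\i_A$ at once — that it is positive and linear (so scalars factor out of $\i_A^m(\rho\, 1_A)$), that it is monotone (so the pointwise inequality $u_m\leq\rho\, 1_A$ propagates under $\i_A^m$), and that for $m\geq 1$ the iterates are supported on $A$ (so that $u_m\leq\rho\, 1_A$ holds in the first place). Once this geometric estimate is established, part ii is essentially bookkeeping: the comparison $\i_A^k 1_B\leq u_{k-1}$ and the summation of the resulting series.
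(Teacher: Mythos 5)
Your proposal is correct and follows essentially the same route as the paper: your key pointwise bound $u_m\leq\rho\,1_A$ propagated through the monotone linear operator $\i_A$ is exactly the paper's argument for part (i), and your Neumann-series tail $\sum_{k>n}\i_A^k 1_B$ with the comparison $\i_A^k 1_B\leq u_{k-1}$ is the same computation the paper performs via the increments $\Delta_n=w_{n+1}-w_n$ (which equal $\i_A^{n+1}1_B$), ending in the identical block-grouping of the geometric tail. Your explicit handling of the degenerate case $m=0$ is a minor tidiness the paper omits.
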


\begin{proof}
    For part (i), we have from \eqref{eq:DP-u} that $u_n = (\i_A)^{n-k}u_k$, for all $0\leq k\leq n$.
    Clearly, from the finiteness of $m$ and the definition of $\rho$ it follows that $u_{m}(\cdot;A)\leq 1_A(\cdot)\rho$, so
    \begin{equation*}
        u_n(\cdot;A) \leq \rho\cdot(\i_A)^{n-m}1_A(\cdot) = \rho u_{n-m}(\cdot;A).
    \end{equation*}
    for $n\geq m$. By induction we obtain that $\|u_n(\cdot;A)\|\leq \rho^{\left\lfloor\frac{n}{m}\right\rfloor}$,
    so that
    \begin{equation*}
      u(\cdot;A) = \lim\limits_{n\to\infty}u_n(\cdot;A) = 0.
    \end{equation*}

    For part (ii), we define functions $\Delta_n(x) := w_{n+1}(x;A,B) - w_n(x;A,B)$. Clearly, it holds that $\Delta_0(x) = 1_A(x)P(x,B)$ and $\Delta_{n+1}(x) = \i_A\Delta_n(x)$. Moreover, from the fact that $\Delta_0(x)\leq u_0(x;A)$ and the monotonicity of the operator $\i_{A}$, we have that $\Delta_n(x)\leq u_n(x;A)$. It further follows that
    \begin{equation*}
        w(x;A,B) - w_n(x;A,B) = \sum\limits_{i=n}^\infty \Delta_i(x) \leq \sum\limits_{i=n}^\infty \rho^{\left\lfloor\frac nm\right\rfloor} \leq \sum\limits_{k=\left\lfloor n/m\right\rfloor}^\infty m\rho^k = \frac{m}{1-\rho}\rho^{\left\lfloor\frac{n}{m}\right\rfloor},
    \end{equation*}
    as desired.
\end{proof}

As mentioned before,
one goal of this section is to reduce a given infinite-horizon problem to a finite-horizon one,
with the ability to tune the error incurred in this reduction.
If $m(A)<\infty$,
and since the right-hand side in \eqref{eq:w-bounds} decreases exponentially fast with respect to $n$,
Proposition \ref{prop:cmp} provides a method to achieve this.
In the following, the condition $m(\cdot)<\infty$, for an appropriate set-valued argument, indicates that the corresponding infinite-horizon problem can be reduced (and thus solved).
%\blue{In some cases in the literature, the set $A\in \b$ is called \emph{uniformly transien} whenever an equivalent condition to $m(A)<\infty$ holds true.}

It is worth mentioning that Proposition \ref{prop:cmp} elucidates the difficulty in the direct extension of the error bounds in \cite{aklp2010,sa2011} from finite- to infinite-horizon problems: the developed finite-horizon approximation techniques can be interpreted as providing a perturbation $\tilde P$ of the original stochastic kernel $P$. Thus, they are
tailored at rendering the one-step error $\|\tilde  P  -  P \|$ (under the operator norm in \eqref{eq:operator-norm}) as small as possible.
However, in general a bound on the one-step error cannot be extended over an infinite time horizon, as the following argument shows.
Let us consider the case where the solution of the invariance problem on a set $A$ for the \dt-MP $(X,\b,P)$ is non-trivial.
We denote the corresponding value function as $u(x;A)$.
It follows from Remark \ref{rem:least-fixpoint} that $\|u\| = 1$.
Let $\nu$ be any probability measure on $(X,\b)$ such that $\nu(A^c)>0$,
and define $P^\delta(x;\cdot) := (1-\delta)P(x;\cdot)+\delta \nu(\cdot)$, for $\delta\in (0,1)$. We have
\begin{equation*}
    \| P ^\delta f -  P  f\| = \left\| \delta \cdot \int_{X} f(y)\nu(\d y) -  \delta\cdot   P  f\right\|\leq \delta \left(\|f\| + \| P  f\|\right)\leq 2\delta \|f\|,
\end{equation*}
for any function $f\in \B$. Hence $\| P ^{\delta} -  P \|\leq 2\delta$, so that it can be made arbitrarily small.
On the other hand, if we denote by $u^\delta$ the solution of the invariance problem on $A$ for \dt-MP $(X,\b,\tilde P^\delta)$,
we obtain that $\|u^\delta_1\|\leq 1-\delta\cdot\nu(A^c)<1$.
As a result, $u^\delta \equiv 0$ by Proposition \ref{prop:cmp}, so that $\|u - u^\delta\| = 1$,
regardless of how small $\delta$ is.

%%%%%%%%%%%%%%%%%%%%%%%%%%%%%%%%%%%%%%%%%%%%%%%%%%%%%%%%%%%%%%%%%%%%%%%%%%%%%%%%
\subsection{Absorbing and simple sets}\label{ssec:abs}

From Proposition \ref{prop:cmp} it follows that the condition $m(A)<\infty$ in particular implies the uniqueness of the solution of the corresponding Bellman equation.
It turns out that under some continuity assumptions on the kernel $P$ this condition is also necessary.
Before we proceed, we introduce the notion of absorbing set, which is crucial for further discussions.

\begin{definition}
A set $A\in \b$ is called absorbing if $P(x,A) = 1$, for all $x\in A$. If for $A\in \b$ there is an absorbing subset $A'\subseteq A$ such that $A''\subseteq A'$ whenever $A''\subseteq A$ is absorbing, then we say that $A'$ is the largest absorbing subset of $A$ and write $A' = \las(A)$. The set $A$ is called simple if it does not have non-empty absorbing subsets, i.e. $\las(A) = \emptyset$, and non-simple otherwise.
\end{definition}

Clearly, the whole state space $X$ and the empty set $\emptyset$ are always absorbing,
and if $(A_n)_{n\geq 0}$ is a countable sequence of non-empty absorbing sets,
then their union $\bigcup_n A_n$ is absorbing and non-empty.
However, it is by no means clear that $\las(A)$ exists for any given set $A$,
since $A$ may contain uncountably many absorbing subsets and their union may not be even measurable.
Surprisingly, invariance value functions are useful to show that $\las(A)$ is always well-defined.

\begin{lemma}
\label{lem:A_n}
    Let $A\in \b$ and denote $A_n := \left\{u_n(\cdot;A) = 1\right\}$ for all $n\geq 0$,
    so that $A_0 = A$. Further, let $A_\infty := \bigcap_{n=0}^\infty A_n\in\b$, then for all $n\geq 0$ it holds that $A_{n+1}\subseteq A_n$ and
    \begin{equation}
    \label{eq:A_n}
        A_{n+1} = \left\{x\in A: P(x,A_n) = 1\right\}.
    \end{equation}
    The set $A_\infty$ admits the representation $A_\infty = \left\{u(\cdot;A) = 1\right\} = \las(A)$, i.e. it is the largest absorbing subset of $A$.
    In particular, if $m(A)<\infty$ then $A$ is simple.
\end{lemma}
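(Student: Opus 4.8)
The plan is to work entirely with the invariance value functions and their recursion \eqref{eq:DP-u}, establishing the set-recursion \eqref{eq:A_n} first, then the nesting, then the two identifications of $A_\infty$, and finally deducing simplicity. Throughout, the only facts I need are that $u_n\downarrow u$ pointwise (already stated) and that $P(x,\cdot)$ is a probability measure.

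First I would unwind the definition $u_{n+1}(x;A)=\i_A u_n(x;A)=1_A(x)\,P u_n(x;A)=1_A(x)\int_X u_n(y;A)\,P(x,\d y)$. Since $0\le u_n\le 1$, the quantity $P u_n(x;A)$ is the average of a $[0,1]$-valued function against the probability measure $P(x,\cdot)$, so it equals $1$ precisely when $u_n(\cdot;A)=1$ holds $P(x,\cdot)$-almost everywhere, i.e. when $P(x,A_n)=1$. Together with the factor $1_A(x)$ this yields $x\in A_{n+1}$ iff $x\in A$ and $P(x,A_n)=1$, which is exactly \eqref{eq:A_n} (with $A_0=A$ coming from $u_0=1_A$). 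The nesting $A_{n+1}\subseteq A_n$ then follows at once from $u_n\downarrow u$ and $u_n\le 1$: if $u_{n+1}(x;A)=1$ then $1=u_{n+1}(x;A)\le u_n(x;A)\le 1$, forcing $u_n(x;A)=1$.

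Next I would identify $A_\infty$. Because $u_n\downarrow u$ pointwise, $u(x;A)=\inf_n u_n(x;A)$, so $u(x;A)=1$ holds iff $u_n(x;A)=1$ for every $n$, that is iff $x\in\bigcap_n A_n=A_\infty$; this gives $A_\infty=\{u(\cdot;A)=1\}$. To see $A_\infty$ is absorbing, fix $x\in A_\infty$: then $P(x,A_n)=1$ for all $n$ by \eqref{eq:A_n}, and since $A_n\downarrow A_\infty$, continuity from above of the probability measure $P(x,\cdot)$ yields $P(x,A_\infty)=\lim_n P(x,A_n)=1$. As $A_\infty\subseteq A_0=A$, it is an absorbing subset of $A$. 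To show it is the largest, let $A''\subseteq A$ be any absorbing set and prove $A''\subseteq A_n$ for all $n$ by induction: the base case $A''\subseteq A=A_0$ is trivial, and if $A''\subseteq A_n$ then for $x\in A''$ absorption gives $P(x,A'')=1$, whence $P(x,A_n)\ge P(x,A'')=1$ by monotonicity, so $x\in A_{n+1}$ by \eqref{eq:A_n}. Thus $A''\subseteq A_\infty$, and $A_\infty=\las(A)$.

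Finally, if $m(A)<\infty$ then Proposition \ref{prop:cmp} (part i) gives $u(\cdot;A)\equiv 0$, so $\{u(\cdot;A)=1\}=\emptyset$ and hence $\las(A)=A_\infty=\emptyset$, i.e. $A$ is simple. The delicate point, and the one I expect to be the crux, is the absorption of $A_\infty$: this is precisely where the measurability worry raised just before the lemma is dissolved, since $A_\infty$ is manifestly measurable as a countable intersection, and the finiteness of $P(x,\cdot)$ is exactly what makes continuity from above (and thus the passage $P(x,A_n)\to P(x,A_\infty)$) legitimate. The remaining steps are routine monotonicity and almost-everywhere arguments.
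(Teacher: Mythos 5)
Your proof is correct and follows essentially the same route as the paper's: you establish \eqref{eq:A_n} by observing that the integral of a $[0,1]$-valued function against the probability measure $P(x,\cdot)$ equals $1$ iff the integrand equals $1$ almost everywhere (the paper makes this explicit via a subtraction argument), identify $A_\infty=\{u(\cdot;A)=1\}$ through the monotone pointwise limit, and obtain absorption of $A_\infty$ from continuity from above of $P(x,\cdot)$ along $A_n\downarrow A_\infty$. The only cosmetic difference is in the maximality step, where you run a set-theoretic induction showing $A''\subseteq A_n$ for every absorbing $A''\subseteq A$, whereas the paper directly invokes $u(\cdot;A)\equiv 1$ on absorbing subsets; both are valid, and your closing appeal to Proposition \ref{prop:cmp} for the $m(A)<\infty$ claim matches the paper's intent.
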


\begin{proof}
    Let us first prove \eqref{eq:A_n}: for an arbitrary $x\in A_{n+1}$ it holds that
    \begin{equation}\label{eq:lem.proof}
        P(x,A)\leq 1=u_{n+1}(x;A) = 1_A(x)\int_X u_n(y;A)P(x,\d y) = \int_A u_n(y;A)P(x,\d y).
    \end{equation}
    Subtracting the right-hand side of \eqref{eq:lem.proof} from the left-hand side, we obtain that
    \begin{equation*}
      \int_A(1-u_n(y;A))P(x,\d y) = 0
    \end{equation*}
    as it is non-positive from \eqref{eq:lem.proof} and the integrand is non-negative. Due to the latter fact, we obtain that $P(x,\{u_n(\cdot;A) = 1\}) = 1$ or equivalently $P(x,A_n) = 1$.

    Conversely, let $x\in A$ be an arbitrary state that satisfies $P(x,A_n) = 1$. Let us show that $x\in A_{n+1}$. Indeed,
    \begin{equation*}
        u_{n+1}(x;A) = \int_X u_n(y;A)P(x,\d y)\geq \int_{A_n}u_n(y;A)P(x,\d y) = P(x,A_n) = 1,
    \end{equation*}
    thus $x\in A_{n+1}$. As a result, we have shown that \eqref{eq:A_n} holds true.

    Since $A_{n+1} = \{x\in A:P(x,A_n) = 1\}$ and $A_1 \subseteq A_0$, we obtain that $A_2\subseteq A_1$. Furthermore, by induction it holds that $A_{n+1}\subseteq A_n$ for all $n\geq 0$. If $u(x;A) = 1$ for some $x\in A$, then $u_n(x;A) = 1$ and $x\in A_n$ for all $n\geq 0$, hence $x\in A_\infty$.
    If $x\in A_\infty$, then $x\in A_n$ for all $n\geq 0$, so that $u(x;A) = \lim_{n\to\infty}u_n(x;A) = 1$.

    Suppose now that $A$ is a non-simple set and that $A'$ is an arbitrary absorbing subset of $A$.
    Clearly, it holds that $u(x;A) = 1$ for all $x\in A'$,
    hence $A'\subseteq A_\infty$.
    Furthermore,
    if $A_\infty\neq\emptyset$, then for any $x\in A_\infty$ and $n\geq 0$ it holds that $x\in A_{n+1}$, hence $P(x,A_n) = 1$. This implies that $A_\infty$ is absorbing since
    \begin{equation*}
        P(x,A_\infty) = P\left(x,\;\bigcap\limits_{n=0}^\infty A_n\right) = \lim\limits_{n\to\infty}P(x,A_n) = 1,
    \end{equation*}
    which leads to conclude that $A_\infty$ is the largest absorbing subset of $A$.
\end{proof}

As it has been mentioned above,
some continuity assumptions on the kernel $P$ are needed in order to sharpen the results.
To do so, the state space needs to be endowed with a certain topological structure (see e.g. \cite{hll1996}).

\begin{definition}
A state space $(X,\b)$ is called topological if $X$ is a Borel subset of a Polish (i.e. a metrizable, complete, and separable) space and if $\b$ is a Borel $\sigma$-algebra of $X$. A kernel $P$ on a topological space is called weakly continuous (or Feller) if the function $ P  f$ is upper semi-continuous (u.s.c.) whenever $f\in \B$ is u.s.c. \cite[Appendix C]{hll1996}.

A \dt-MP $(X,\b,P)$ is said to be weakly continuous whenever $(X,\b)$ is a topological state space and $P$ is weakly continuous.
\end{definition}

The next theorem shows that for a weakly continuous \dt-MP,
the infinite-horizon problem over a compact set $A$ can be directly reduced to the finite-horizon one (in the sense that $m(A)< \infty$) if and only if the set $A$ is simple.

\begin{theorem}\label{thm:main}
    Let $(X,\b)$ be a topological state space and $A$ be a compact set. If $P$ is weakly continuous then $\las(A)$ is a compact set and the following statements are equivalent:
    \begin{enumerate}
      \item $m(A)<\infty$;
      \item $\i^n_A$ is a contraction on $\B$ for some finite $n$ (contractivity);
      \item equation \eqref{eq:Bellman-u} has a unique solution (uniqueness);
      \item $u(x;A)  = 0$ for all $x\in X$ (triviality);
      \item the set $A$ is simple: $A_\infty = \emptyset$ (simplicity).
    \end{enumerate}
\end{theorem}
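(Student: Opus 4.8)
The plan is to prove the cycle $(1)\Rightarrow(2)\Rightarrow(3)\Rightarrow(4)\Rightarrow(5)\Rightarrow(1)$, observing at the outset that the first four implications need neither compactness nor weak continuity, and that all the work is concentrated in closing the loop via $(5)\Rightarrow(1)$. For $(1)\Rightarrow(2)$ I would first record that, since $\i_A$ is a positive operator, its norm is attained at the constant function $1$: for $\|f\|\leq1$ one has $-\i_A^n 1\leq \i_A^n f\leq \i_A^n 1$, so that $\|\i_A^n\|=\|\i_A^n 1\|$; since $\i_A 1 = 1_A = u_0$, iterating gives $\|\i_A^n\|=\|u_{n-1}(\cdot;A)\|$, and hence $m(A)<\infty$ is exactly the assertion that $\i_A^n$ contracts for $n=m(A)+1$. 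The implication $(2)\Rightarrow(3)$ is a Banach fixed-point argument: any solution of \eqref{eq:Bellman-u} is a fixed point of $\i_A^n$, which is unique when the latter contracts. Since $u\equiv0$ always solves \eqref{eq:Bellman-u}, uniqueness forces $u(\cdot;A)\equiv0$, yielding $(3)\Rightarrow(4)$; and $(4)\Rightarrow(5)$ is immediate from Lemma \ref{lem:A_n}, since $\las(A)=A_\infty=\{u(\cdot;A)=1\}=\emptyset$.

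The substance of the proof is $(5)\Rightarrow(1)$, and it factors through the auxiliary claim that every $u_n(\cdot;A)$ and the limit $u(\cdot;A)$ are u.s.c. and vanish off $A$. Vanishing off $A$ follows from $u_{n+1}=1_A\cdot P u_n$. For upper semicontinuity I would argue inductively: $u_0=1_A$ is u.s.c. because $A$, being compact, is closed; if $u_n$ is non-negative and u.s.c. then $P u_n$ is u.s.c. by the Feller property, and the product of the two non-negative u.s.c. functions $1_A$ and $P u_n$ is again u.s.c.; finally $u=\inf_n u_n$ is u.s.c. as a pointwise-decreasing infimum of u.s.c. functions. This step already delivers the compactness of $\las(A)$: as $u\leq1$, the set $\{u(\cdot;A)=1\}=\{u(\cdot;A)\geq1\}$ is closed, and being contained in the compact set $A$ it is compact.

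Now assume $A$ is simple, so $\{u(\cdot;A)=1\}=A_\infty=\emptyset$ and hence $u(\cdot;A)<1$ everywhere. Here compactness converts a pointwise statement into a uniform one: since $u$ is u.s.c. and supported on the compact set $A$, it attains its supremum, so $\|u\|=\max_A u<1$. By the least-fixpoint characterization in Remark \ref{rem:least-fixpoint}, a non-trivial solution would satisfy $\|u\|=1$; since $\|u\|<1$, we conclude $u(\cdot;A)\equiv0$. It then remains to upgrade the pointwise convergence $u_n\downarrow0$ to $\|u_n\|\to0$, and this is precisely Dini's theorem in its u.s.c. form: the sets $\{u_n(\cdot;A)<\varepsilon\}$ are open (as $u_n$ is u.s.c.), increasing (as $u_n$ is decreasing), and cover the compact set $A$, so one of them equals $A$ for $n$ large, giving $\|u_n\|<\varepsilon$; taking $\varepsilon<1$ yields $m(A)<\infty$ and closes the cycle.

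The main obstacle is exactly this passage from simplicity to triviality and finiteness of $m(A)$. Pointwise information alone ($u<1$ and $u_n\to0$ pointwise) is too weak: the perturbation argument preceding the theorem shows that a non-trivial invariance solution can sit arbitrarily close to a trivial one in the one-step operator norm, so no soft argument on $\B$ can suffice. It is only the interplay of upper semicontinuity (furnished by the Feller property) with compactness of $A$ -- through the attainment of suprema and through Dini's theorem -- that forces the reduction. I would therefore take particular care to verify the u.s.c. version of Dini's theorem and the stability of upper semicontinuity under decreasing infima, since these are the technical linchpins on which the equivalence with simplicity rests.
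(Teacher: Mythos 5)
Your proposal is correct, and for the chain $(1)\Rightarrow(2)\Rightarrow(3)\Rightarrow(4)\Rightarrow(5)$ it coincides with the paper's proof (your sharper observation that $\|\i_A^n\|=\|u_{n-1}(\cdot;A)\|$, which yields $(1)\Leftrightarrow(2)$ outright, is a small bonus the paper only records in the appendix). The closing implication $(5)\Rightarrow(1)$ is where you genuinely diverge. The paper argues by contraposition: if $m(A)=\infty$ then every $A_n=\{u_n(\cdot;A)\geq 1\}$ is non-empty (the supremum $\|u_n\|=1$ is attained because $u_n$ is u.s.c.\ with support in the compact set $A$) and compact, so the finite-intersection property gives $A_\infty\neq\emptyset$, contradicting simplicity --- the limit function $u$ never enters the argument. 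You instead go forward: simplicity plus attainment of the supremum of the u.s.c.\ function $u$ on the compact set $A$ gives $\|u\|<1$, the least-fixpoint norm fact of Remark \ref{rem:least-fixpoint} then forces $u\equiv 0$, and Dini's theorem for u.s.c.\ functions upgrades $u_n\downarrow 0$ to uniform convergence, i.e.\ $m(A)<\infty$. Both routes rest on the same two pillars (propagation of upper semicontinuity through the Feller kernel, compactness of $A$); the paper's is slightly more economical, needing only nested non-empty compacta, while yours has the merit of exhibiting the intermediate step $(5)\Rightarrow(4)$ explicitly and of isolating exactly where compactness is consumed (attained suprema and Dini). Your derivation of the compactness of $\las(A)$ from the upper semicontinuity of $u$ is likewise correct and matches what the paper obtains implicitly from the compactness of the sets $A_n$.
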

\begin{proof}
    \textit{1)} $\Rightarrow$ \textit{2)} Clearly, for any function $f\in \B$ it follows that $\i_A f(x)\leq \|f\|1_A(x)$ for all states $x\in X$. Thus if $m(A)<\infty$, then $\i^{m(A)+1}$ is a contraction since
    \begin{equation*}
    \|\i^{m(A)+1}_A f\| \leq \|f\|\cdot\|\i^{m(A)}_A1_A\| = \|f\|\cdot\|u_{m(A)}(\cdot;A)\| \leq \rho(A)\|f\|.
    \end{equation*}

    \textit{2)} $\Rightarrow$ \textit{3)} If $f\in \B$ be a solution of \eqref{eq:Bellman-u}, i.e. $f = \i_A f$. By induction we have $f = \i_A^n f$, which by contraction mapping theorem \cite[Proposition A.1]{hl1989} implies the uniqueness of the fixpoint $f$.

    \textit{3)} $\Rightarrow$ \textit{4)} follows from the linearity of \eqref{eq:Bellman-u} and \textit{4)} $\Rightarrow$ \textit{5)} from Lemma \ref{lem:A_n}, so we only have to show that \textit{5) }$\Rightarrow$ \textit{1)}. Suppose this is not true, i.e. $m(A) = \infty$ but $A$ is simple. It follows that $A_n\neq\emptyset$ for all $n\geq 0$. Since $A$ is compact and $X$ is metrizable, $A$ is closed and hence $u_0 = 1_A$ is u.s.c. Hence $u_n$ is u.s.c. for all $n\geq 0$ by the weak continuity of $P$, which implies that all sets $A_n = \{u_n(\cdot;A)\geq 1\}$ are compact. Moreover, they are not empty and so their intersection $A_\infty$ is compact and non-empty, which leads to a contradiction.
\end{proof}

\begin{remark}\label{rem:thm.main}
 Within the main goal of reducing infinite-horizon problems over a set $A$ to finite-horizon ones,
 let us remark that numerical methods for finite-horizon problems leading to the computation of PCTL value functions with any given precision have been developed, up to our knowledge, only for compact subsets of finite-dimensional metric spaces \cite{aklp2010,sa2011} -- this aligns with the assumption raised for Theorem \ref{thm:main}. Also, conditions required on the kernel $P$ in loc. cit. are stronger than the weak continuity raised above. Taking all of this into account, the assumptions in Theorem \ref{thm:main} are rather mild. Furthermore, some of the relations in the theorem are true under even weaker conditions: we postpone the discussion of these facts to Section \ref{sec:appendix} (Appendix).
\end{remark}

\begin{remark}
  It follows directly from Theorem \ref{thm:main} that if $m(A)<\infty$
  then $\i^{m(A)+1}_A$ is a contraction and furthermore, $\|\i^{m(A)+1}_A\| \leq \rho(A)$.
\end{remark}

%%%%%%%%%%%%%%%%%%%%%%%%%%%%%%%%%%%%%%%%%%%%%%%%%%%%%%%%%%%%%%%%%%%%%%%%%%%%%%%%
\subsection{A decomposition technique}\label{ssec:decomp}

Although Theorem \ref{thm:main} is stated in terms of value functions for the invariance problem,
its application to the issue of uniqueness of the solution of a reach-avoid problem is direct,
since \eqref{eq:Bellman-u} is a homogeneous version of \eqref{eq:Bellman-w}.
As a result, if the \dt-MP $(X,\b,P)$ is weakly continuous, sets $A$, $B$ are disjoint and $A$ is compact and simple, then $m(A)<\infty$ and the reach-avoid problem can be solved.
Thus, the next goal is to study the case of a non-simple set $A$.
For this objective the characterization given in Theorem \ref{thm:main} is again useful.
We proceed assuming that the $\las$ of a given set is known,
and leave the discussion on the characterization of the $\las$ of a given set and the verification of the simplicity of a set to Section \ref{ssec:v.o.s.}.

If $A$ is non-simple,
the main issue preventing an efficient solution of the problem is the presence of an absorbing subset $\las(A)$.
This leads to the lack of contractivity of the operator $\i_A$ and to the non-uniqueness of the solution of \eqref{eq:Bellman-w}.
Intuitively,
if we were to remove some neighborhood $C \supset \las(A)$,
then we would expect that $m(A\setminus C)<\infty$,
so that a related problem can be solved on $A\setminus C$.
Moreover, recall that the solution of the original problem on $\las(A)$ is known: $w(x;A,B) = 0$ for all $x\in \las(A)$,
since such states initialize trajectories that never reach the set $B$.
The following result relates the solutions of the two problems:

\begin{lemma}[Decomposition technique]\label{lem:decomp}
    Let sets $A,B\in\b$ be disjoint,
    and let the set $C\in \b, C \subseteq A$ be such that the invariance value function $u(\cdot;A\setminus C)\equiv 0$.
    Then $w(x;A\setminus C,B)$ is the unique solution of the corresponding Bellman equation
    \begin{equation}\label{eq:decompBel}
      w(x;A\setminus C,B) = 1_B(x)+\i_{A\setminus C} w(x;A\setminus C,B),
    \end{equation}
    and for all $x\in X$ the following holds:
    \begin{equation}\label{eq:decomp}
      0\leq w(x;A,B) - w(x;A\setminus C,B)\leq \sup\limits_{y\in C}w(y;A,B).
    \end{equation}
\end{lemma}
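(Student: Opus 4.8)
The plan is to establish the three claims of the lemma in order: first the uniqueness of the solution of \eqref{eq:decompBel}, then the lower bound $w(x;A,B) - w(x;A\setminus C,B) \geq 0$ in \eqref{eq:decomp}, and finally the upper bound. The uniqueness is the conceptually central step but follows quickly from the machinery already in place: by hypothesis $u(\cdot;A\setminus C)\equiv 0$, which is exactly condition \textit{4)} (triviality) in Theorem \ref{thm:main}, and hence equivalent to $m(A\setminus C)<\infty$ and to the contractivity of $\i^n_{A\setminus C}$ for some finite $n$ (conditions \textit{1)} and \textit{2)}). Since \eqref{eq:decompBel} is the Bellman equation \eqref{eq:Bellman-w} with the set $A$ replaced by $A\setminus C$, and its associated homogeneous equation is precisely \eqref{eq:Bellman-u} for the operator $\i_{A\setminus C}$, the contractivity of $\i^n_{A\setminus C}$ forces the affine equation \eqref{eq:decompBel} to have a unique solution, which is then necessarily the reach-avoid value function $w(\cdot;A\setminus C,B)$ by Remark \ref{rem:least-fixpoint}.

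For the lower bound in \eqref{eq:decomp}, the cleanest route is a probabilistic (coupling) comparison: shrinking the avoid-constraint set from $A$ to $A\setminus C$ shrinks the event $\{A\U B\}$, since any trajectory satisfying $(A\setminus C)\U B$ also satisfies $A\U B$ (the avoid region $(A\setminus C)^c$ contains $A^c$). Formally, $\{\tau_B \leq \tau_{(A\setminus C)^c},\,\tau_B<\infty\}\subseteq\{\tau_B\leq\tau_{A^c},\,\tau_B<\infty\}$, so monotonicity of $\pr_x$ gives $w(x;A\setminus C,B)\leq w(x;A,B)$ for every $x$. This is essentially the content of Proposition \ref{prop:subsat} applied to the avoid sets.

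The upper bound is the main obstacle and will require the bulk of the work. The natural idea is a first-passage decomposition of the event $A\U B$ for the larger problem according to whether or not the trajectory enters $C$ before reaching $B$. Writing $\sigma := \tau_C$ for the hitting time of $C$, one splits $w(x;A,B)$ into the contribution of paths that reach $B$ without ever entering $C$ (while staying in $A$) — which is captured exactly by $w(x;A\setminus C,B)$ — and the contribution of paths that enter $C$ first. For the latter, the strong Markov property at the stopping time $\sigma$ gives that the conditional probability of eventually satisfying $A\U B$, given that the trajectory is at some state $y=\x_\sigma\in C$, is at most $w(y;A,B)$. The difference $w(x;A,B)-w(x;A\setminus C,B)$ is then bounded by the probability of entering $C$ (staying in $A$ beforehand) times the supremum of $w(y;A,B)$ over $y\in C$, yielding $\sup_{y\in C}w(y;A,B)$ as the bound. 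The delicate points to handle carefully are the precise measure-theoretic bookkeeping in the first-passage splitting — ensuring the ``avoid $C$ until reaching $B$'' paths match $w(\cdot;A\setminus C,B)$ and that the strong Markov step is applied to the correctly defined stopping time — and the observation that the probability of entering $C$ while remaining in $A$ is itself at most $1$, so that the factor multiplying $\sup_{y\in C}w(y;A,B)$ does not exceed unity.
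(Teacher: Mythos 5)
Your treatment of the inequality \eqref{eq:decomp} matches the paper's own argument: the lower bound is the monotonicity of Proposition \ref{prop:subsat} applied to $A\setminus C\subseteq A$, and the upper bound is obtained by splitting the event $A\U B$ according to whether the trajectory reaches $B$ before entering $C$ (which yields exactly $w(x;A\setminus C,B)$) or enters $C$ first, and then applying the strong Markov property at the hitting time of $C$. The paper phrases this as a partition of $\Omega$ into four hypotheses $H_1,\dots,H_4$, with your ``enter $C$ first'' event being $H_4$ and the measure $\nu_x(D)=\pr_x(\x_{\tau_C}\in D\mid H_4)$ implementing your strong-Markov step; the bound $\sup_{y\in C}w(y;A,B)$ then follows since $\nu_x$ is supported on $C$ and $\pr_x(H_4)\leq 1$. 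That part of your plan is sound and essentially identical to the paper's.

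The uniqueness step, however, contains a genuine gap. You argue that $u(\cdot;A\setminus C)\equiv 0$ is ``equivalent to $m(A\setminus C)<\infty$ and to the contractivity of $\i^n_{A\setminus C}$,'' and deduce uniqueness from the contraction mapping theorem. That equivalence is the content of Theorem \ref{thm:main}, which requires a topological state space, a weakly continuous kernel and, crucially, \emph{compactness} of the set; none of these hypotheses appears in Lemma \ref{lem:decomp}, where $C$ is an arbitrary measurable subset of $A$ and $A\setminus C$ need not be compact even when $A$ is (the lemma is applied in Theorem \ref{thm:Bellman-w.mod} with $C=\las(A)$, and the first counterexample in the Appendix exhibits $\tilde A=[-1,1]\setminus\{0\}$ with $u(\cdot;\tilde A)\equiv 0$ yet $m(\tilde A)=\infty$, so triviality does \emph{not} imply contractivity in the lemma's generality). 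The implication you actually need is triviality $\Rightarrow$ uniqueness, which does hold with no topological assumptions: this is Proposition \ref{prop:uniqueness} in the Appendix, proved via the least-fixpoint characterization of Remark \ref{rem:least-fixpoint} rather than via contraction, and it is exactly what the paper cites at this point. Replacing your contractivity detour by that proposition --- together with the routine observation that two solutions of the affine equation \eqref{eq:decompBel} differ by a solution of the homogeneous equation \eqref{eq:Bellman-u} for the operator $\i_{A\setminus C}$, and that $w(\cdot;A\setminus C,B)$ is always one solution --- closes the gap; the remainder of your argument stands.
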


\begin{proof}
  As an abbreviation,
  let us denote $\tau_1 = \tau_{B\cup C}$ and $\tau_2 = \tau_{(A\cup B)^c}$ and let us partition the event space $\Omega$ by the following four disjoint hypotheses\footnote{
    Alternatively, these hypotheses can be defined using PCTL framework,
    and are given by path formulae
    $H_1 = \neg(B\vee C)\mathsf U \neg (A\vee B)$,
    $H_2 = \square (A\setminus C)$,
    $H_3 = (A\setminus C)\mathsf U B$ and
    $H_4 = (A\setminus C)\mathsf U C$.
  }:
  \begin{equation*}
    H_1 := \{\tau_2<\tau_1,\tau_2<\infty\},\quad H_2 := \{\tau_1 = \infty,\tau_2 = \infty\},
  \end{equation*}
  \begin{equation*}
    H_3 := \{\tau_1<\tau_2,\tau_1<\infty,\tau_1 =\tau_B\},\quad H_4 := \{\tau_1<\tau_2,\tau_1<\infty,\tau_1 =\tau_C\}.
  \end{equation*}
  Recall that $w(x;A,B) =  \pr_x\{\tau_B<\tau_2,\tau_B<\infty\}$, thus
  \begin{equation*}
    w(x;A,B) = \sum\limits_{i=1}^4 \pr_x\left(\left\{\tau_B<\tau_2,\tau_B<\infty\right\}\cap H_i\right).
  \end{equation*}
  Note that the first term is zero since clearly $\{\tau_B<\tau_2,\tau_B<\infty\}\cap H_1 = \emptyset$.
  The second term vanishes because $ \pr_x(H_2) = u\left(x;A\setminus C\right)\equiv 0$.
  Since $H_3\subseteq\left\{\tau_B<\tau_2,\tau_B<\infty\right\}$ the third term equals to $ \pr_x(H_3) = w(x;A\setminus C,B)$,
  which leaves only the fourth term to be studied.
  Let $x$ be any such that $ \pr_x(H_4)\neq 0$ and define a measure $\nu_x$ on $(X,\b)$ by
  \begin{equation*}
    \nu_x(D) :=  \pr_x\left(\left.\x_{\tau_C}\in D\right|H_4\right),\quad D\in \b,
  \end{equation*}
  so that clearly $\nu_x(C^c) = 0$. For such fixed $x$ it holds that
  \begin{align*}
    0\leq  \pr_x\left(\left\{\tau_B<\tau_2,\tau_B<\infty\right\}\cap H_4\right) &=  \pr_x\left(\left.\left\{\tau_B<\tau_2,\tau_B<\infty\right\}\right| H_4\right) \pr_x(H_4)
    \\
    &= \left(\int_C w(y;A,B)\nu_x(\d y)\right)\cdot w(x;A,C)
    \\
    &\leq \sup\limits_{y\in C}w(x;A,B).
  \end{align*}
  The same bounds clearly hold in the alternative case $ \pr_x(H_4) = 0$.

  Finally, it follows that $w(x;A\setminus C,B)$ is the unique solution of the corresponding Bellman equation \eqref{eq:decompBel} from $u(\cdot;A\setminus C)\equiv 0$ (see Proposition \ref{prop:uniqueness} in Section \ref{sec:appendix}).
\end{proof}

\begin{corollary}\label{cor:decomp}[From Lemma \ref{lem:decomp}]
    Let $(X,\b,P)$ be a weakly continuous \dt-MP and let $A,B\in \b$ be disjoint and such that $A$ is a compact, non-simple set. Let $C\subseteq A$ be an open neighborhood of $\las(A)$ in the subspace topology of $A$. Then \eqref{eq:decomp} holds for all $x\in X$, and $m(A\setminus C)<\infty$.
\end{corollary}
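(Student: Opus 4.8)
The plan is to reduce both assertions of the corollary to a single structural fact: that the set $A\setminus C$ is compact and simple. Once this is established, Theorem \ref{thm:main} applies (the \dt-MP is weakly continuous and $A\setminus C$ is compact) and delivers at once the equivalence of simplicity with $m(A\setminus C)<\infty$ and with the triviality $u(\cdot;A\setminus C)\equiv 0$. The former is exactly the second claim of the corollary; the latter is precisely the hypothesis required to invoke Lemma \ref{lem:decomp}, which then yields the bound \eqref{eq:decomp} for all $x\in X$. Compactness of $A\setminus C$ is immediate: since $C$ is open in the subspace topology of $A$, its complement $A\setminus C$ is closed in $A$, and a closed subset of the compact set $A$ is compact; it is also measurable, being the intersection of a compact set with a Borel set. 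So the whole argument hinges on the simplicity of $A\setminus C$.

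The heart of the proof is therefore this simplicity claim. First I would record that, because $C$ is an open neighborhood of $\las(A)$ in $A$, we have $\las(A)\subseteq C$, hence $\las(A)\cap(A\setminus C)=\emptyset$. Now suppose for contradiction that $A\setminus C$ admits a non-empty absorbing subset $D$. Since $D\subseteq A\setminus C\subseteq A$, the set $D$ is in particular a non-empty absorbing subset of $A$; by Lemma \ref{lem:A_n} the largest absorbing subset $\las(A)$ contains every absorbing subset of $A$, so $D\subseteq\las(A)\subseteq C$. But $D\subseteq A\setminus C$ as well, whence $D\subseteq C\cap(A\setminus C)=\emptyset$, contradicting $D\neq\emptyset$. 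Therefore $\las(A\setminus C)=\emptyset$, i.e. $A\setminus C$ is simple.

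With compactness and simplicity in hand, Theorem \ref{thm:main} applied to $A\setminus C$ gives both $m(A\setminus C)<\infty$ and $u(\cdot;A\setminus C)\equiv 0$. The first statement is the desired bound on $m$. Feeding the second into Lemma \ref{lem:decomp} — whose remaining hypotheses, namely that $A,B$ are disjoint and $C\subseteq A$, hold by assumption — establishes the inequality \eqref{eq:decomp}, completing the proof.

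I do not anticipate a serious obstacle here: the only point requiring care is the direction of the inclusions in the simplicity argument, that an absorbing subset of the smaller set $A\setminus C$ is still an absorbing subset of $A$ so that the maximality of $\las(A)$ can be brought to bear, together with the observation $\las(A)\subseteq C$ coming from $C$ being a neighborhood of $\las(A)$. Everything else is a matter of correctly chaining the equivalences of Theorem \ref{thm:main} with the hypotheses of Lemma \ref{lem:decomp}.
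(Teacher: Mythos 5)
Your proposal is correct and follows essentially the same route as the paper: establish that $A\setminus C$ is compact (closed subset of the compact set $A$) and simple (any absorbing subset of $A\setminus C$ would be an absorbing subset of $A$, hence contained in $\las(A)\subseteq C$, which is impossible), then invoke Theorem \ref{thm:main} to obtain $m(A\setminus C)<\infty$ and $u(\cdot;A\setminus C)\equiv 0$, the latter being exactly the hypothesis of Lemma \ref{lem:decomp}. The only difference is that you spell out the maximality argument for simplicity, which the paper leaves implicit in the phrase ``from the inclusions $\las(A)\subset C\subset A$ it follows that $A\setminus C$ is simple.''
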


\begin{proof}
    Since $C$ is open in $A$,
    the set $A\setminus C$ is a closed subset of a compact set $A$ and thus itself compact.
    From the inclusions $\las(A)\subset C\subset A$ it follows that $A\setminus C$ is simple,
    hence Theorem \ref{thm:main} ensures that all the conditions of Lemma \ref{lem:decomp} are satisfied.
\end{proof}

\smallskip

In order to render the result in Corollary \ref{cor:decomp} useful for the computation of the infinite-horizon reach-avoid value function,
we should provide a method to choose an open neighborhood $C$ of $\las(A)$,
such that $\sup_{y\in C}w(y;A,B)<\ve$,
where $\ve>0$ is a given precision level.
We use the theory of excessive functions \cite{s2008} to achieve this goal.

\begin{definition}
    Given a function $g\in\B$, the excessive set of $g$ is $\e_g = \left\{ P  g- g \leq 0\right\}$.
    If $\e_g = X$,  i.e. if $ P  g(x)\leq g(x)$ for all $x\in X$,
    then the function $g\in \B$ is called excessive.
\end{definition}

The relation between excessive functions and infinite-horizon invariance is given via Doob's inequality \cite{s2008}:
if $g\in \B$ is an excessive,
non-negative function,
then
\begin{equation}\label{eq:doob}
   \pr_x\left\{\sup\limits_{n\geq 0}g(\x_n)\geq \delta\right\}\leq \frac{g(x)}{\delta}.
\end{equation}
for all $\delta>0$. The inequality \eqref{eq:doob} can be rewritten via the invariance value function:
\begin{equation}\label{eq:u-doob}
    u\left(x;\left\{g<\delta\right\}\right)\geq 1 - \frac{g(x)}{\delta}.
\end{equation}

Excessive functions for stochastic systems are akin to Lyapunov functions for deterministic systems, since they are characterized by decreasing behavior along the dynamics of the process,
as the inequality $ P  g\leq g$ suggests\footnote{From the definition of $ P $ is follows that $ P  g(x) = \mathsf E_x[g(\x_1)]$, where $\mathsf E_x$ denotes the expectation with respect to $ \pr_x$. Thus the condition $ P  g\leq g$ means that the expected value of the function $g$ at the next time step is bounded by its current value,
so that the function $g$ does not increase on average along realizations of the \dt-MP. Thus the function $g\in \B$ is excessive if and only if the process $(g(\x_n))_{n\geq 0}$ is a $ \pr_x$-supermartingale, for all $x\in X$ \cite[p.20]{ps2006}.}.
As is the case with Lyapunov functions for deterministic systems,
it is non trivial to find excessive functions.
However, it is possible to relax the assumption on global excessivity and to employ a local version of Doob's inequality.

\begin{lemma}\cite[Theorem 12]{k1967}\label{lem:LDI}
  Let $g\in \B$ be a non-negative function such that for some $\delta>0$ it holds that $\{g<\delta\}\subseteq \mathcal{E}_g$.
  Whenever $x\in \{g<\delta\}$,
  it follows that
  \begin{equation}
  \label{eq:doob.kushner}
     \pr_x\left\{\sup\limits_{n\geq 0}g(\x_n)\geq \delta\right\}\leq \frac{g(x)}{\delta}.
  \end{equation}
\end{lemma}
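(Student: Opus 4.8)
The plan is to build a stopped non-negative supermartingale out of the process $(g(\x_n))$ and to apply a Markov-type estimate, in the spirit of the proof of the global inequality \eqref{eq:doob}. Fix $x\in\{g<\delta\}$ and introduce the hitting time $T:=\inf\{n\geq 0:g(\x_n)\geq\delta\}$, which is a stopping time with respect to the natural filtration $\f_n:=\sigma(\x_0,\dots,\x_n)$. The relevant object is the stopped process $M_n:=g(\x_{n\wedge T})$, which is non-negative (as $g\geq 0$), adapted, and satisfies $M_0=g(x)$.

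The central step is to verify that $(M_n)$ is a $\pr_x$-supermartingale. On the event $\{T\leq n\}$ the process is frozen, so $M_{n+1}=M_n$; on the event $\{T>n\}$ one has $g(\x_k)<\delta$ for every $k\leq n$, hence in particular $\x_n\in\{g<\delta\}\subseteq\e_g$, so that by the Markov property together with the local excessivity hypothesis $\mathsf E_x[M_{n+1}\mid\f_n]=\mathsf E_x[g(\x_{n+1})\mid\f_n]=Pg(\x_n)\leq g(\x_n)=M_n$. Combining the two cases gives $\mathsf E_x[M_{n+1}\mid\f_n]\leq M_n$. This is precisely where the weakened hypothesis $\{g<\delta\}\subseteq\e_g$, rather than global excessivity, suffices: only the states actually visited strictly before $T$ enter the computation, and all of them lie in $\{g<\delta\}$.

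From the supermartingale property, $\mathsf E_x[M_N]\leq M_0=g(x)$ for every $N$. On $\{T\leq N\}$ one has $M_N=g(\x_T)\geq\delta$, so discarding the non-negative contribution on $\{T>N\}$ yields $\delta\,\pr_x(T\leq N)\leq\mathsf E_x[M_N]\leq g(x)$, i.e. $\pr_x(T\leq N)\leq g(x)/\delta$. Letting $N\to\infty$ and using continuity of the measure along the increasing events $\{T\leq N\}$ gives $\pr_x(T<\infty)\leq g(x)/\delta$.

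It remains to identify $\{T<\infty\}$ with $\{\sup_{n\geq 0}g(\x_n)\geq\delta\}$, and I expect this to be the only genuinely delicate point. The inclusion $\{T<\infty\}\subseteq\{\sup_n g(\x_n)\geq\delta\}$ is immediate, but the reverse can fail on the boundary event where the supremum equals $\delta$ yet is never attained. I would circumvent this by rerunning the entire argument with a strictly smaller threshold $\delta'<\delta$: since $\{g<\delta'\}\subseteq\{g<\delta\}\subseteq\e_g$, the hypothesis remains valid, and the same reasoning gives $\pr_x\{\sup_n g(\x_n)\geq\delta\}\leq\pr_x\big(\inf\{n:g(\x_n)\geq\delta'\}<\infty\big)\leq g(x)/\delta'$; letting $\delta'\uparrow\delta$ then recovers \eqref{eq:doob.kushner}.
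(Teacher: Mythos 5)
Your argument is correct and complete, but it is worth pointing out that the paper does not actually prove this lemma: it is imported as Theorem 12 of Kushner's book, and the surrounding text only sketches the idea, namely that the invariance value function over $A=\{g<\delta\}$ is insensitive to the kernel outside $A$, so the local hypothesis $\{g<\delta\}\subseteq\e_g$ can play the role of global excessivity and the global Doob inequality \eqref{eq:doob} can be invoked after a suitable modification of the process outside $A$. Your route is the direct one: optional stopping of the non-negative, bounded (hence integrable, since $g\in\B$) supermartingale $M_n=g(\x_{n\wedge T})$, with the supermartingale inequality verified separately on $\{T\le n\}$, where the process is frozen, and on $\{T>n\}\in\sigma(\x_0,\dots,\x_n)$, where $\x_n\in\{g<\delta\}\subseteq\e_g$ so that local excessivity applies via the Markov property. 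This makes the lemma self-contained rather than a black-box citation, and it is essentially the classical proof of the maximal inequality itself. You also correctly isolate the one genuinely delicate point, that $\{\sup_{n\geq 0}g(\x_n)\geq\delta\}$ may strictly contain $\{T<\infty\}$ when the supremum equals $\delta$ without being attained, and your fix of rerunning the argument at thresholds $\delta'\uparrow\delta$ is legitimate: for $\delta'\in(g(x),\delta)$ one has $x\in\{g<\delta'\}\subseteq\e_g$, the event $\{\sup_n g(\x_n)\geq\delta\}$ is contained in the hitting event at level $\delta'$, and the resulting bound $g(x)/\delta'$ converges to $g(x)/\delta$. No gaps.
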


The idea behind the proof of this lemma is to consider a set $A = \{g<\delta\}$.
The related invariance value function does not depend on $P(x,\cdot)$ for $x\in A^c$,
where it is simply equal to zero (recall that all the integrals in the DP recursion \eqref{eq:DP-u} are equivalently taken over the set $A$).
As a result, exclusively the dynamics
within the set $A$ are important for the process.
Note also that for $x\notin \{g<\delta\}$ one trivially has \eqref{eq:doob.kushner} as in such case the term in the right-hand side is greater or equal than $1$.

\begin{definition}
     For a topological state space $(X,\b)$ we say that a non-negative continuous function $g\in \B$ is $\delta$-locally excessive on the set $A\in \b$ if $\{g = 0\} = \las(A)$ and for some real number $\delta>0$ it holds that $\{g<\delta\}\subseteq A$ and $\{g<\delta\}\subseteq \e_g$.
\end{definition}

\begin{theorem}\label{thm:decomp}
  Let $(X,\b,P)$ be a weakly continuous \dt-MP and let $A,B\in \b$ be disjoint and such that $A$ is a compact, non-simple set. If there exists a $\delta$-locally excessive function $g$ on $A$, then for any $\ve\in (0,1)$ it holds that $m(A\setminus \{g<\ve\delta\})<\infty$ and that
  \begin{equation}\label{eq:decomp-Lyap}
    0\leq w(x;A,B) - w\left(x;A\setminus\{g<\ve\delta\},B\right)\leq \ve.
  \end{equation}
\end{theorem}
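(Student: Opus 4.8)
The plan is to apply the decomposition result of Corollary~\ref{cor:decomp} with the open neighbourhood $C := \{g<\ve\delta\}$, and then to bound the residual term $\sup_{y\in C}w(y;A,B)$ appearing in \eqref{eq:decomp} by means of the local Doob inequality of Lemma~\ref{lem:LDI}.

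First I would check that $C=\{g<\ve\delta\}$ qualifies as an open neighbourhood of $\las(A)$ in the subspace topology of $A$. Since $g$ is continuous and $\ve\delta>0$, the set $C$ is open in $X$; because $\ve<1$ we have $C\subseteq\{g<\delta\}\subseteq A$ directly from the definition of a $\delta$-locally excessive function, so $C$ is open in the subspace topology of $A$. Moreover $\las(A)=\{g=0\}\subseteq C$, so $C$ is a neighbourhood of $\las(A)$. Corollary~\ref{cor:decomp} then yields at once that $m(A\setminus\{g<\ve\delta\})<\infty$ and that the decomposition inequality \eqref{eq:decomp} holds with this choice of $C$, namely
\begin{equation*}
  0\leq w(x;A,B)-w\left(x;A\setminus\{g<\ve\delta\},B\right)\leq \sup_{y\in\{g<\ve\delta\}}w(y;A,B).
\end{equation*}

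It then remains to show that the right-hand side is at most $\ve$. The key observation is that, since $A$ and $B$ are disjoint and $\{g<\delta\}\subseteq A$, one has $B\subseteq A^c\subseteq\{g\geq\delta\}$, so $g\geq\delta$ everywhere on $B$. Consequently, along any trajectory in the event $A\U B$ one has $\tau_B<\infty$ and $\x_{\tau_B}\in B$, whence $\sup_{n\geq0}g(\x_n)\geq g(\x_{\tau_B})\geq\delta$; this gives the inclusion
\begin{equation*}
  A\U B\subseteq\left\{\sup_{n\geq0}g(\x_n)\geq\delta\right\},
\end{equation*}
and therefore $w(y;A,B)=\pr_y(A\U B)\leq\pr_y\{\sup_{n\geq0}g(\x_n)\geq\delta\}$ for every $y\in\{g<\ve\delta\}$.

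Finally I would invoke Lemma~\ref{lem:LDI}: the hypothesis $\{g<\delta\}\subseteq\e_g$ built into the definition of a $\delta$-locally excessive function is exactly what the lemma requires, and any $y\in\{g<\ve\delta\}\subseteq\{g<\delta\}$ lies in its range of applicability, so $\pr_y\{\sup_{n\geq0}g(\x_n)\geq\delta\}\leq g(y)/\delta$. Since $g(y)<\ve\delta$ on $C$, this gives $w(y;A,B)<\ve$ and hence $\sup_{y\in\{g<\ve\delta\}}w(y;A,B)\leq\ve$, which combined with the displayed decomposition inequality yields \eqref{eq:decomp-Lyap}. I expect the only genuinely substantive step to be the inclusion $A\U B\subseteq\{\sup_{n}g(\x_n)\geq\delta\}$: the rest is bookkeeping, but this step is where the geometry is used, namely that $B$ is separated from the sublevel set $\{g<\delta\}$, so that $g$ must cross the threshold $\delta$ before a trajectory can reach $B$, which is precisely what lets the soft reach-avoid probability be controlled by the pathwise maximum appearing in the local Doob inequality.
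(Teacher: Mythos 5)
Your proof is correct and follows essentially the same route as the paper: both apply Corollary \ref{cor:decomp} with $C=\{g<\ve\delta\}$ and then bound the residual $\sup_{y\in C}w(y;A,B)$ by $\ve$ via the local Doob inequality of Lemma \ref{lem:LDI}. The only (cosmetic) difference is that you obtain $w(y;A,B)\leq g(y)/\delta$ through the direct event inclusion $A\U B\subseteq\{\sup_{n}g(\x_n)\geq\delta\}$, whereas the paper reaches the same bound by chaining $u(\cdot;A)\geq u(\cdot;\{g<\delta\})$, the duality $u(\cdot;A)=1-w(\cdot;X,A^c)$, and the monotonicity of Proposition \ref{prop:subsat}.
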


\begin{proof}
  First, we show that for any $\ve\in (0,1)$,
  if $g(x)<\ve\delta$ then $w(x;A,B)\leq \ve$.
  Indeed, as $\{g<\delta\}\subseteq \e_g$, by Lemma \ref{lem:LDI} we have that $\displaystyle{u(x;\{g<\delta\})\geq 1-\frac{g(x)}{\delta}}$ for all $x\in X$, so
  \begin{equation*}
    u(x;A)\geq 1-\frac{g(x)}{\delta}
  \end{equation*}
  for all $x\in X$, which follows from $\{g<\delta\} \subseteq A$. Since $u(x;A) = 1-w\left(x;X,A^c\right)$, then
  \begin{equation*}
    w(x;X,A^c)\leq \frac{g(x)}{\delta}
  \end{equation*}
  for all $x\in X$, and since $A\subseteq X$ and $B\subseteq A^c$,
  from Proposition \ref{prop:subsat} it follows that $w(x;A,B)\leq\frac{g(x)}{\delta}$.
  As a result, for any $x\in \{g<\ve\delta\}$ it holds that $w(x;A,B)\leq\ve$.

  Second, let us fix any $\ve\in(0,1)$ and denote $C = \{g<\ve\delta\}$. Clearly, $\las(A)\subseteq C$ and the set $\{g<\ve\delta\}$ is open in $A$ since $g$ is continuous on $A$. The statement of the theorem then follows from Corollary \ref{cor:decomp}.
\end{proof}

%%%%%%%%%%%%%%%%%%%%%%%%%%%%%%%%%%%%%%%%%%%%%%%%%%%%%%%%%%%%%%%%%%%%%%%%%%%%%%%%
\subsection{Integral kernels and discrete-space Markov Chains
}\label{ssec:integral}

From Theorem \ref{thm:decomp} it follows that for weakly continuous \dt-MPs a reach-avoid problem on a non-simple set can be solved if an appropriate locally excessive function is found.
For a known and studied subclass of these processes the problem can be solved even without resorting to such functions.
We write $P(x,\d y) = p(x,y)\mu(\d y)$ if $P$ is an integral kernel with a basis $\mu$ and a density $p$,
namely when $\mu$ is a $\sigma$-finite non-negative measure, the function
$p:X\times X\to [0,\infty)$ is jointly measurable, and for any $A\in \b$ it holds that
\begin{equation*}
    P(x,A) = \int_{A}p(x,y)\mu(\d y).
\end{equation*}
Furthermore, we raise the following assumption,
which generalizes the one used for related studies over the finite horizon \cite{aklp2010,sa2011}.

\begin{assumption}\label{as:1}
  For a subset $A\in \b$ of a topological state space $(X,\b)$ assume that the function $P(\cdot,D)$ is continuous on $A$ for any $D\subseteq A$, $D\in \b$.
\end{assumption}

Let us mention some sufficient conditions for the Assumption \ref{as:1} to hold true for integral kernels. It follows from \cite[Example C.6]{hll1996} that whenever $p(\cdot,y)$ is a continuous function on the set $A\in \b$ for all $y\in A$, then Assumption \ref{as:1} is satisfied for the set $A$. It is thus milder than aforementioned assumptions of \cite{aklp2010,sa2011} where the stronger Lipschitz continuity is required instead.

Before we prove the main result, we need the following lemma that connects the condition $m(\cdot)<\infty$ with an important notion of the uniform transitivity \cite{mt1993}.

\begin{lemma}\label{lem:ut}
  Let $(X,\b,P)$ be a \dt-MP. Suppose that the set $A\in \b$ is uniformly transient, i.e. there exists $M < \infty$ such that $\sum_{n=0}^\infty  P ^n 1_A(x) \leq M$ for all $x\in A$. Then $m(A) < \infty$.
\end{lemma}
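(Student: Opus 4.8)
The plan is to compare the invariance value functions $u_n(\cdot;A)$ with the unconstrained iterates $P^n 1_A$, turning the occupation bound hidden in uniform transience into a decay estimate for $\|u_n(\cdot;A)\|$. First I would record three elementary facts from \eqref{eq:DP-u}: that $u_n(\cdot;A) = \i_A^n 1_A$, that the sequence $n\mapsto u_n(x;A)$ is non-increasing for each fixed $x$, and that for $n\geq 1$ one has $u_n(x;A) = 0$ whenever $x\notin A$ (because $\i_A f(x) = 1_A(x)Pf(x)$), so that $\|u_n(\cdot;A)\| = \sup_{x\in A}u_n(x;A)$. It therefore suffices to bound $u_m(x;A)$ uniformly over $x\in A$ by a quantity strictly below $1$ for some finite $m$.

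The key step is the point-wise domination $\i_A^n 1_A(x)\leq P^n 1_A(x)$ for all $x\in X$ and all $n\geq 0$, which I would prove by induction on $n$. The case $n=0$ is an equality; for the inductive step, the positivity (monotonicity) of the transition operator $P$ yields $P(\i_A^n 1_A)\leq P(P^n 1_A) = P^{n+1}1_A$, and hence $\i_A^{n+1}1_A(x) = 1_A(x)\,P(\i_A^n 1_A)(x)\leq P^{n+1}1_A(x)$ since $1_A(x)\leq 1$ and the right-hand side is non-negative. Summing over $n$ and invoking the hypothesis gives $\sum_{n=0}^\infty u_n(x;A)\leq \sum_{n=0}^\infty P^n 1_A(x)\leq M$ for every $x\in A$; intuitively the left-hand sum is the expected exit time $\mathsf E_x[\tau_{A^c}]$, dominated by the expected number of visits $\sum_n P^n 1_A(x)$.

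The conclusion then follows from the monotonicity of $n\mapsto u_n(x;A)$: for any $m\geq 0$ and any $x\in A$ the bound $u_0(x;A)\geq\cdots\geq u_m(x;A)$ gives $(m+1)u_m(x;A)\leq \sum_{n=0}^m u_n(x;A)\leq M$, so that $u_m(x;A)\leq M/(m+1)$. Taking the supremum over $x\in A$ yields $\|u_m(\cdot;A)\|\leq M/(m+1)$, which is strictly less than $1$ as soon as $m+1>M$; for any such finite $m$ we obtain $m(A)\leq m<\infty$. The only genuinely nontrivial ingredient is the comparison $\sum_n u_n(\cdot;A)\leq \sum_n P^n 1_A$ on $A$, i.e. passing from uniform transience (a statement about unconstrained occupation of $A$) to a bound on the constrained survival probabilities $u_n$; everything else reduces to the standard estimate for monotone summable sequences, so I expect this comparison, resting on the positivity of $P$ and the identity $\i_A f = 1_A Pf$, to be the main point to get right.
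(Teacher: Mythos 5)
Your proof is correct and follows essentially the same route as the paper's: both rest on the pointwise domination $\i_A^n 1_A \leq P^n 1_A$ (valid for non-negative functions by positivity of $P$) together with the monotonicity of $n\mapsto u_n(x;A)$, so that uniform transience bounds $\sum_{n} u_n(x;A)$ by $M$ on $A$. The only difference is cosmetic: the paper argues by contradiction, picking points $x_n$ with $u_n(x_n;A)\geq \tfrac12$, whereas your direct estimate $(m+1)u_m(x;A)\leq M$ additionally yields the explicit bound $m(A)\leq \lceil M\rceil$.
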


\begin{proof}
  Suppose that $m(A) = \infty$, then for any $n\in \N_0$ there exists a point $x_n\in A$ such that $u_n(x_n;A)\geq \frac12$. Clearly, for any $0\leq k\leq n$ it further holds that
  \begin{equation*}
    u_k(x_n;A)\geq u_n(x_n;A)\geq \frac12.
  \end{equation*}
  Note, that for any non-negative function $f \in \B$ and for any $n\in \N_0$ it holds that $\i_A^n f(x)\leq  P ^n f(x)$ for all $x\in X$. As a result:
  \begin{equation*}
    \sum_{n=0}^\infty  P ^n 1_A(x) \geq \sum_{n=0}^\infty \i_A^n 1_A(a) = \sum_{n=0}^\infty u_n(x;A).
  \end{equation*}
  On the other hand, $\sum_{k=0}^\infty u_k(x_n;A) \geq \frac n2$ and thus $A$ is not uniformly transient.
\end{proof}

We are now ready to state the main result of the section.

\begin{theorem}\label{thm:Bellman-w.mod}
    Let $(X,\b,P)$ be a \dt-MP on a topological state space $(X,\b)$ and suppose that $A\in \b$ is a compact set satisfying Assumption \ref{as:1}.
    For any set $B\in \b$ disjoint from $A$ it holds that $m(A\setminus \las(A))<\infty$ and for all $x\in X$
    \begin{equation}\label{eq:Bellman-w.mod}
        w(x;A,B) = w(x;A\setminus\las(A),B).
    \end{equation}
\end{theorem}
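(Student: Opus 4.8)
The plan is to prove the two assertions in order, observing first that the identity \eqref{eq:Bellman-w.mod} is an almost immediate consequence of the finiteness $m(A\setminus\las(A))<\infty$, so that the entire difficulty is concentrated in the latter. Write $A':=A\setminus\las(A)$. Granting $m(A')<\infty$, Proposition \ref{prop:cmp}(i) yields $u(\cdot;A')\equiv 0$, which is exactly the hypothesis needed to invoke the decomposition Lemma \ref{lem:decomp} with $C:=\las(A)$; the resulting bound reads $0\le w(x;A,B)-w(x;A',B)\le \sup_{y\in\las(A)}w(y;A,B)$. Since $\las(A)$ is absorbing and contained in $A$, and $A\cap B=\emptyset$, a trajectory started in $\las(A)$ remains there forever and never meets $B$; hence $w(\cdot;A,B)\equiv0$ on $\las(A)$, the right-hand side vanishes, and \eqref{eq:Bellman-w.mod} follows. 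From here on I therefore focus on showing $m(A')<\infty$.

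Two preliminary observations set the stage. First, $A'$ is simple: any absorbing $A''\subseteq A'$ is an absorbing subset of $A$, hence $A''\subseteq\las(A)$ by Lemma \ref{lem:A_n}, and being also disjoint from $\las(A)$ it must be empty. Second, Assumption \ref{as:1} passes to every measurable subset of $A$, hence to $A'$; I then claim all iterates $u_n(\cdot;A')$ are continuous on $A'$. This follows by induction, since whenever $u_n(\cdot;A')$ is bounded, measurable and supported on $A'$, the map $x\mapsto\int_{A'}u_n(y;A')P(x,\d y)$ is continuous on $A'$: bounded measurable functions supported on $A'$ are uniform limits of simple functions $\sum_i c_i 1_{D_i}$ with $D_i\subseteq A'$, each $x\mapsto\sum_i c_i P(x,D_i)$ is continuous on $A'$ by Assumption \ref{as:1}, and the approximation is uniform in $x$ because $P(x,A')\le 1$. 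The obstacle now becomes visible: unlike in Theorem \ref{thm:main}, the set $A'$ is only open in $A$ and hence not compact, so the finite-intersection and attained-maximum arguments cannot be applied to it verbatim. The crux is to control $u_n(\cdot;A')$ near the boundary $\las(A)$.

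This control is precisely what Assumption \ref{as:1} supplies. Applied with $D=\las(A)\subseteq A$, the map $P(\cdot,\las(A))$ is continuous on $A$ and equals $1$ on the absorbing set $\las(A)$; thus $P(x,\las(A))\to 1$ and $u_n(x;A')\le u_1(x;A')=P(x,A')\le 1-P(x,\las(A))\to 0$ as $x\to x_*\in\las(A)$, uniformly over $n\ge 1$. From this I first deduce $u(\cdot;A')\equiv 0$: were it nontrivial, Remark \ref{rem:least-fixpoint} would force $\sup_{x\in A'}u(x;A')=1$, so I could choose $x_k\in A'$ with $u(x_k;A')\to 1$ and, by compactness of $A$, assume $x_k\to x_*\in A$. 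If $x_*\in\las(A)$ the boundary estimate gives $u(x_k;A')\to 0$, a contradiction; if $x_*\in A'$ then upper semicontinuity of the decreasing limit $u(\cdot;A')$ forces $u(x_*;A')=1$, i.e. $x_*\in\{u(\cdot;A')=1\}=\las(A')=\emptyset$ by Lemma \ref{lem:A_n} and simplicity, again a contradiction.

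Finally I upgrade $u(\cdot;A')\equiv 0$ to the uniform statement $m(A')<\infty$ by separating a neighborhood of $\las(A)$ from a compact core. Fix $\eta\in(0,1)$ and set $V:=\{x\in A:P(x,\las(A))>1-\eta\}$, which is open in $A$ and contains $\las(A)$; on $A'\cap V$ the boundary estimate gives $u_n(\cdot;A')<\eta$ for all $n\ge 1$. The complement $K:=A\setminus V$ is a closed, hence compact, subset of $A$ contained in $A'$; on $K$ the continuous functions $u_n(\cdot;A')$ decrease pointwise to the continuous limit $0$, so Dini's theorem yields uniform convergence and an index $N$ with $\sup_{x\in K}u_N(x;A')<\eta$. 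Combining the two regions gives $\|u_N(\cdot;A')\|\le\eta<1$, i.e. $m(A')\le N<\infty$, which completes the argument. The single genuinely delicate point is the non-compactness of $A'$; everything hinges on the boundary estimate $u_n(\cdot;A')\to 0$ near $\las(A)$, which is exactly where Assumption \ref{as:1} — ruling out the concentrated moving-mass behavior that would otherwise keep the invariance value close to $1$ near an absorbing set — is indispensable.
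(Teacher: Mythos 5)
Your proof is correct, but it takes a genuinely different route from the paper's. The paper disposes of the hard part, $m(A\setminus\las(A))<\infty$, by importing the Doeblin decomposition from the classical Markov chain literature: it builds an auxiliary process on $A\cup\{a\}$ (with $a$ an absorbing cemetery state collecting the mass of $A^c$), invokes \cite[Theorem 7.1]{tt1979} to split $A$ into an absorbing set $H\subseteq\las(A)$ and a uniformly transient set $E$, and then applies Lemma~\ref{lem:ut} to convert uniform transience of $E$ into $m(E)<\infty$. You instead give a self-contained topological argument in the spirit of Theorem~\ref{thm:main}: Assumption~\ref{as:1} makes every iterate $u_n(\cdot;A\setminus\las(A))$ continuous (via uniform approximation by simple functions with level sets inside $A$), the estimate $u_n\leq u_1=P(\cdot,A\setminus\las(A))\leq 1-P(\cdot,\las(A))$ kills the iterates uniformly near the absorbing boundary, and Dini's theorem on the compact core $A\setminus V$ supplies the uniform bound $\|u_N\|\leq\eta<1$. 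The intermediate step $u(\cdot;A\setminus\las(A))\equiv 0$ via upper semicontinuity and simplicity is also sound, and your reduction of \eqref{eq:Bellman-w.mod} to Lemma~\ref{lem:decomp} with $C=\las(A)$ is exactly the paper's. What each approach buys: the paper's proof is short modulo a deep external theorem whose construction is, as the paper itself notes in Section~\ref{ssec:connections}, non-constructive and characterized via the very infinite-horizon quantities one is trying to compute; your proof is longer but elementary, stays entirely within the paper's own toolkit (Lemmas~\ref{lem:A_n} and \ref{lem:decomp}, Remark~\ref{rem:least-fixpoint}), and makes explicit \emph{where} Assumption~\ref{as:1} is really needed, namely to control the invariance iterates in a neighborhood of $\las(A)$, which is precisely the point the counterexamples of Section~\ref{ssec:cs.1} (where $P(\cdot,\{0\})$ fails to be continuous) illustrate.
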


\begin{proof}
  First of all, in case $m(A\setminus \las(A)) <\infty$ the equality \eqref{eq:Bellman-w.mod} follows immediately from Lemma \ref{lem:decomp}, in particular from \eqref{eq:decomp} with $C = \las(A)$. To show that $m(A\setminus \las(A)) <\infty$ we apply the Doeblin decomposition of $A$ into a finite number of absorbing sets and a uniformly transient set \cite{tt1979}.

  Fix some point $a\notin A$ and define a new \dt-MP $(X',\b',P')$ where $X' = A\cup \{a\}$ endowed with a disjoint union topology \cite{r1976} and $\b'$ is its Borel $\sigma$-algebra. Define a kernel $P'$ by the formulae $P'(a,\{a\}) = 1$, $P'(x,\{a\}) = P(x,A^c)$ and $P'(x,D) = P(x,D)$ for any set $D\in \b'$ such that $D\subseteq A$. It clearly holds that $P(\cdot,D)$ is a continuous function on $X'$ for any $D\in \b'$ and thus from \cite[Theorem 7.1]{tt1979} it follows that $A$ can be represented as a disjoint union
  \begin{equation*}
    A = H \cup E,
  \end{equation*}
  where $H$ is absorbing and $E$ is uniformly transient with respect to the kernel $P'$. Since $H\subseteq A$, $E\subseteq A$, and thanks to the fact that kernels $P$ and $P'$ agree on $A$, we obtain that $H$ is absorbing and $E$ is uniformly transient with respect to the original kernel $P$. As a result, $H\subseteq \las(A)$ and $m(E)<\infty$ by Lemma \ref{lem:ut}, which further leads to the fact that
  \begin{equation*}
    m(A\setminus \las(A)) \leq m(A\setminus H) = m(E)<\infty,
  \end{equation*}
  as desired.
\end{proof}

%\begin{proof}
%  To prove \eqref{eq:Bellman-w.mod} we apply the decomposition technique in Lemma \ref{lem:decomp} over a set $C:=(\las(A))^\circ$.
%  The result follows since $w(y;A,B) = 0$ for all $y\in (\las(A))^\circ$, so we are only left to prove that for the set $\tilde A:=A\setminus (\las(A))^\circ$  it holds that $m(\tilde A)<\infty$.
%
%  If $i.$ is true, then $\las(A)$ is compact by Theorem \ref{thm:main} so $$\las(A) = (\las(A))^\circ\cup P artial(\las(A)).$$ As a result, the set $\tilde A:=A\setminus (\las(A))^\circ$ is compact and simple since $\mu( P artial(\las(A))) = 0$. From Theorem \ref{thm:main} it follows that $m(\tilde A) <\infty$.
%
%  If instead $ii.$ is true, then $u_n(\cdot;A)$ is a continuous function on $A$ for any $n\geq 0$, since
% \begin{equation}\label{eq:Lip}
%    \begin{split}
%    |u_{n+1}(x'';A) - u_{n+1}(x';A)| &= \left|\int\limits_{ A} u_n(y; A)\left(p\left(x'',y\right) - p\left(x',y\right)\right)\mu(\d y)\right|
%    \\
%    &\leq \int\limits_A\left|p\left(x'',y\right) - p\left(x',y\right)\right|\mu(\d y)
%    \\
%    &\leq\left(\int\limits_A\lambda_A(y)\mu(\d y)\right)\cdot d\!\!\left(x',x''\right), \nonumber
%    \end{split}
%  \end{equation}
%  for any pair of states $x',x''\in A$.
%  From Corollary \ref{cor:main} it follows that $\las(A)$ is compact.
%  Following the same argument as in case $i.$,
%  we obtain that the set $\tilde A$ is compact and simple. Similar to \eqref{eq:Lip} we obtain that $u_n(\cdot;\tilde A)$ is a continuous function on $\tilde A$ for any $n\geq 0$, hence $m(\tilde A)<\infty$ by Corollary \ref{cor:main}.
%\end{proof}

\begin{remark}
  In the special case of the invariance problem over the set $A$,
  under the assumptions of Theorem \ref{thm:Bellman-w.mod} it follows that $u(x;A) = w(x;A,\las(A))$.
  Moreover, the proof of Lemma \ref{lem:decomp} implies that for any initial state $x\in X$,
  $ \pr_x$-a.s. a trajectory $(\x_n)_{n\geq 0}$ of the \dt-MP that stays invariant in the set $A$ necessarily reaches its largest absorbing subset.
  Altogether, this enlightens yet another interesting relation between invariance and reach-avoid problems.
\end{remark}

\medskip

Let us now elucidate the meaning of the results obtained above by considering a special case where the state space is countable,
i.e. when the process is a discrete-time Markov Chain (\dt-MC).
The methods we developed are directly applicable to the \dt-MC framework:
they generalize those presented in \cite[Section 10.1.1]{bk2008}.

Without loss of generality,
let us assume that $X = \N$ is the state space of a \dt-MC.
We endow $X$ with the discrete metric $d(i,j) = 1\{i\neq j\}$, so that $\b = 2^X$.
The basis $\sigma$-finite measure is chosen to be the counting one:
$\mu(\{i\}) = 1$, for any $i\in X$.
Any stochastic kernel $P$ over $(X,\b)$ can be expressed as a  matrix $\mathrm P = (p_{ij})_{i,j\in \N}$,
where $p_{ij}:=P(i,\{j\})$.
With the chosen counting measure,
the entries of the stochastic matrix $\mathrm P$ determine the density function,
namely $p(i,j) = p_{ij}$.
Recall that in the discrete topology compact sets are exactly finite sets.
Thus, any finite set $A\subseteq X$ satisfies Assumption \ref{as:1} since on the discrete topological space any function is continuous.

\begin{remark}\label{rem:las-dtMC}
    For a \dt-MC the largest absorbing subset of any finite set can be found algorithmically. Indeed, from Lemma \ref{lem:A_n} it follows that $\las(A) = \{u(\cdot;A) = 1\}$, so the set can be equivalently expressed via a CTL formula: $\las(A) = \left\{x\in A:x\models \forall\square A\right\}$.
    As such, it can be computed in $\mathscr O(\mu^2(A))$ time over the adjacent graph of the original \dt-MC (to be defined below) \cite[Theorem 6.30]{bk2008}.
\end{remark}

\begin{corollary}[from Theorem \ref{thm:Bellman-w.mod}]
    Let sets $A$, $B\in \b$ be disjoint and let the set $A$ be finite. Denote $\tilde A :=A\setminus \las(A)$ and $b_i:=P(i,B) = \sum_{j\in B}p_{ij}$. The reach-avoid value function $w(i;A,B)$ is defined uniquely as a solution of the system of linear equations
    \begin{equation}\label{eq:Bellman-w.dtMC}
    \begin{cases}
      w(i;A,B) = 1&\text{ if }i\in B,
      \\
      w(i;A,B) = b_i+\sum\limits_{j\in \tilde A} p_{ij} w(j;A,B) &\text{ if }i\in \tilde A,
      \\
      w(i;A,B) = 0&\text{ otherwise.}
    \end{cases}
    \end{equation}
\end{corollary}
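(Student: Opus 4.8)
The plan is to combine Theorem \ref{thm:Bellman-w.mod}, which identifies the reach-avoid value function on $A$ with the one on the simple set $\tilde A = A\setminus\las(A)$, with a direct translation of the associated operator Bellman equation into the finite linear system \eqref{eq:Bellman-w.dtMC}. First I would verify that the hypotheses of Theorem \ref{thm:Bellman-w.mod} are met: in the discrete topology a finite set is compact, and as observed above any finite set satisfies Assumption \ref{as:1}, since every function on a discrete space is continuous. Hence Theorem \ref{thm:Bellman-w.mod} applies and yields both $m(\tilde A)<\infty$ and the identity $w(i;A,B)=w(i;\tilde A,B)$ for all $i\in X$. This reduces the task to characterizing $w(\cdot;\tilde A,B)$.

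Next I would write down the Bellman equation for the reduced problem. Since $\tilde A\subseteq A$ is disjoint from $B$, we have $\tilde A\setminus B=\tilde A$, so \eqref{eq:decompBel} (equivalently \eqref{eq:Bellman-w} with $C=\las(A)$) reads
\[ w(i;\tilde A,B)=1_B(i)+1_{\tilde A}(i)\sum_{j\in\N}p_{ij}\,w(j;\tilde A,B), \]
where I have expanded $\i_{\tilde A}f(i)=1_{\tilde A}(i)\sum_j p_{ij}f(j)$ using the counting-measure density $p(i,j)=p_{ij}$. The three cases of \eqref{eq:Bellman-w.dtMC} then drop out by inspecting the two indicator factors. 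For $i\in B$ one has $1_B(i)=1$ and $1_{\tilde A}(i)=0$, giving $w=1$; for $i$ in neither $B$ nor $\tilde A$ both indicators vanish, giving $w=0$; and for $i\in\tilde A$ one has $1_B(i)=0$, $1_{\tilde A}(i)=1$, so splitting the sum over $j\in B$, over $j\in\tilde A$, and over the remaining states, and substituting the already-determined boundary values $w|_B=1$ and $w=0$ elsewhere, collapses it to $w(i;A,B)=b_i+\sum_{j\in\tilde A}p_{ij}w(j;A,B)$ with $b_i=\sum_{j\in B}p_{ij}$. Replacing $w(\cdot;\tilde A,B)$ by $w(\cdot;A,B)$ throughout, which is legitimate by the first step, recovers \eqref{eq:Bellman-w.dtMC}.

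The step I expect to require the most care is uniqueness, since a priori \eqref{eq:Bellman-w.dtMC} is an affine system, and affine Bellman equations may admit an entire affine family of solutions, as emphasized around \eqref{eq:Bellman-w}. Here, however, the genuine unknowns are only the finitely many values $\{w(i):i\in\tilde A\}$, the remaining values being fixed boundary data; the system is thus $(\mathrm I-\mathrm P_{\tilde A})\,w|_{\tilde A}=b|_{\tilde A}$, where $\mathrm P_{\tilde A}$ is the substochastic submatrix of transitions internal to $\tilde A$. Because $m(\tilde A)<\infty$, Theorem \ref{thm:main} gives that $\i_{\tilde A}^n$ is a contraction for some finite $n$, equivalently that the spectral radius of $\mathrm P_{\tilde A}$ is strictly below $1$, so $\mathrm I-\mathrm P_{\tilde A}$ is invertible and the solution is unique. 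Alternatively, one may simply invoke the uniqueness of the solution of \eqref{eq:decompBel} established in Lemma \ref{lem:decomp} via Proposition \ref{prop:uniqueness}, which applies since $u(\cdot;\tilde A)\equiv 0$ follows from $m(\tilde A)<\infty$ by Theorem \ref{thm:main}.
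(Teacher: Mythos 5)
Your proposal is correct and follows essentially the same route as the paper: verify Assumption \ref{as:1} holds for a finite set in the discrete topology, invoke Theorem \ref{thm:Bellman-w.mod} to pass to $\tilde A$, and unpack the operator Bellman equation into the three cases of \eqref{eq:Bellman-w.dtMC}. The paper's own proof is terser and leaves the uniqueness step implicit (noting only afterwards that the submatrix $(p_{ij})_{i,j\in\tilde A}$ is contractive because $m(\tilde A)<\infty$), which is exactly the argument you spell out.
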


\begin{proof}
    As it is mentioned above Assumption \ref{as:1} is satisfied. It further follow from Theorem \ref{thm:Bellman-w.mod} that \eqref{eq:Bellman-w.mod} holds true and that the corresponding Bellman equation has the form $ w(i;A,B) = 1_B(i)+1_{\tilde A}(i)\sum_{j\in \N}p_{ij}w(j;A,B)$,
    which is equivalent to \eqref{eq:Bellman-w.dtMC}.
\end{proof}

Note, that to find a solution for \eqref{eq:Bellman-w.dtMC},
one should solve a system of linear equations with a non-zero determinant.
Moreover, notice that the square submatrix $\tilde{\mathrm P} := (p_{ij})_{i,j\in \tilde A}$ in \eqref{eq:Bellman-w.dtMC} is contractive since $m(\tilde A)<\infty$,
so even for large-scale problems efficient numerical methods can be applied to solve the problem with any given precision.

Let us mention what the condition $m(A)<\infty$ means in graph-theoretical terms for a \dt-MC.
The adjacency graph of a \dt-MC is a directed graph $(V,E)$,
where with $V = X$ and the set of edges $E$ is such that $(i,j)\in E$ if and only if $p_{ij}>0$. To an arbitrary element $i\in A$ we can assign a positive number $m_i$, which is the length of the shortest path in the graph $(V,E)$ from $i$ to $A^c$. Clearly, it holds that $m(A) = \sup_{i\in A}m_i$. Moreover, from this characterization it can be easily seen that $m(A)\leq \mu(A)$ if $m(A)$ is finite. As a result, if $\|u_{\mu(A)+1}(\cdot;A)\| = 1$ it follows that $m(A) = \infty$ and that
\begin{equation*}
  \las(A) = A_{\mu(A)+1} = \left\{u_{\mu(A)+1}(\cdot;A) = 1\right\}.
\end{equation*}

%%%%%%%%%%%%%%%%%%%%%%%%%%%%%%%%%%%%%%%%%%%%%%%%%%%%%%%%%%%%%%%%%%%%%%%%%%%%%%%%
\subsection{Verification of simplicity of $A$ and characterization of $\las(A)$}\label{ssec:v.o.s.}

Let us summarize the methods developed for the solution of the infinite-horizon reach-avoid problem in the previous sections.
Assume that $A,B\in \b$ are disjoint and let us focus on the case when the set $A$ is compact and the kernel $P$ is weakly continuous.
If $A$ is simple, it follows from Theorem \ref{thm:main} that the solution of \eqref{eq:Bellman-w} is unique and that $m(A)<\infty$ -- the solution can be found as in Proposition \ref{prop:cmp}.
If $A$ is non-simple, the solution of \eqref{eq:Bellman-w} is not unique and $m(A) = \infty$, thus Proposition \ref{prop:cmp} cannot be applied directly.
In the latter case,
there are two approaches to solve an infinite-horizon reach-avoid problem over a non-simple set $A$:
if Assumption \ref{as:1} holds true,
then Theorem \ref{thm:Bellman-w.mod} allows formulating an equivalent problem over the set $A\setminus \las(A)$.
Otherwise, one has to synthesize an appropriate $\delta$-locally excessive function to apply Theorem \ref{thm:decomp}.

All the instances discussed above depend on the fundamental issue of whether a given compact set $A$ is simple or not.
In general it is hard to provide an analytical answer to such a question,
and no known general automatic procedure enables computing absorbing sets exactly.
On the other hand,
the ``if and only if'' nature of the results in Theorem \ref{thm:main} implies that this issue is not a limitation that is specific to the techniques presented in this paper:
on the contrary, any other method aiming to solve a general infinite-horizon reach-avoid problem is bound to check the simplicity of a given set $A$.

Let us discuss instances of \dt-MP for which the $\las(A)$ of a given set $A$ can be found explicitly.
The case of \dt-MC, as discussed in Remark \ref{rem:las-dtMC},
has been recently extended to a subclass of \dt-MP with integral kernels in \cite[Chapter 4.2]{ta2012}.
In both instances all the conditions in Theorem \ref{thm:Bellman-w.mod} are satisfied,
thus the reach-avoid problem can be solved.

Given additional knowledge on the structure of a \dt-MP,
it may be easier to verify the simplicity of a given set $A$:
if $P$ is $\varphi$-irreducible \cite[Chapter 4]{mt1993}, then $A$ is simple whenever $\varphi(A^c)>0$. If $\varphi$ is the maximal irreducibility measure, then $A$ is simple if and \textit{only if} $\varphi(A^c)>0$. However, notice that for a given \dt-MP the  verification of its irreducibility can represent an even harder requirement than the verification of the simplicity of a specific set $A$. Moreover, observe that any \dt-MP admitting two disjoint  non-empty absorbing sets is not irreducible,
which points out the conservatism of this condition.

An additional example where further knowledge on the structure of \dt-MP may shed light on the absorbance of its sets
is provided in \cite{akm2011}.
As already mentioned, an automaton specification $\mathscr A$ over a \dt-MP $\mathscr H = (X,\b,P)$ can be verified as a reachability specification over the product $\mathscr A\times\mathscr H$, which is again a \dt-MP. The discrete structure of the automaton $\mathscr A$ can be exploited in order to determine absorbing sets within the product \dt-MP $\mathscr A\otimes\mathscr H$.

Furthermore, analytical methods can be applied to find absorbing sets.
If the dynamical system representation of a \dt-MP \eqref{eq:dynamical} is known one can try to characterize its absorbing sets,
as the examples of Section \ref{sec:cs} will display.
Also, for integral kernel $P(x,\d y) = p(x,y)\mu(\d y)$ with density $p$ given explicitly, one may try to check for simplicity using the following result.

\begin{proposition}\cite[Proposition 3]{ta2012}
    For $x\in X$ define $s(x) := \{y\in X:p(x,y)>0\}$. A set $A\in \b$ is absorbing if and only if $\mu(s(x)\setminus A) = 0$, for all $x\in A$.
\end{proposition}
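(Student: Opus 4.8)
The plan is to prove both directions of the biconditional by translating the absorbing condition $P(x,A)=1$ into a statement about the support density. First I would recall that, for an integral kernel, the transition probability is $P(x,D)=\int_D p(x,y)\mu(\d y)$ for any $D\in\b$. The key observation is that for a fixed $x\in A$, the condition $P(x,A)=1$ is equivalent to saying that the measure $P(x,\cdot)$ assigns no mass to the complement $A^c$, i.e. $P(x,A^c)=0$. Rewriting this via the density, $P(x,A^c)=\int_{A^c}p(x,y)\mu(\d y)=0$, and since the integrand $p(x,\cdot)$ is non-negative, this integral vanishes if and only if $p(x,y)=0$ for $\mu$-almost every $y\in A^c$.

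Second, I would connect this almost-everywhere vanishing to the support set $s(x)=\{y:p(x,y)>0\}$. The statement ``$p(x,y)=0$ for $\mu$-a.e. $y\in A^c$'' means precisely that the set of points in $A^c$ where $p(x,\cdot)$ is strictly positive is $\mu$-null. That set is exactly $s(x)\cap A^c = s(x)\setminus A$. Hence $P(x,A)=1$ holds for a given $x$ if and only if $\mu(s(x)\setminus A)=0$. Quantifying over all $x\in A$ then yields the equivalence: $A$ is absorbing (i.e. $P(x,A)=1$ for all $x\in A$) if and only if $\mu(s(x)\setminus A)=0$ for all $x\in A$.

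For the backward direction I would run the same chain of equalities in reverse: if $\mu(s(x)\setminus A)=0$, then since $p(x,\cdot)$ is supported (up to positivity) on $s(x)$, the integral $\int_{A^c}p(x,y)\mu(\d y)$ is taken over a region where $p(x,\cdot)$ is $\mu$-a.e. zero, so $P(x,A^c)=0$ and therefore $P(x,A)=1-P(x,A^c)=1$.

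The main technical point to be careful about — rather than a genuine obstacle — is the handling of the almost-everywhere qualifier and its interaction with the definition of $s(x)$. One must verify that $s(x)$ is measurable (which follows from the joint measurability of $p$, so that $p(x,\cdot)$ is measurable for fixed $x$) and that the equivalence between ``the integral of a non-negative measurable function over a set is zero'' and ``the function vanishes $\mu$-a.e. on that set'' is applied correctly. No uniformity in $x$ is needed, since the absorbing condition is checked pointwise for each $x\in A$; the proof is therefore a routine but clean application of elementary measure theory.
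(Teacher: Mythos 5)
Your proof is correct. The paper states this proposition without proof, deferring to the cited reference \cite[Proposition 3]{ta2012}, so there is no in-paper argument to compare against; your reduction of $P(x,A)=1$ to $P(x,A^c)=\int_{A^c}p(x,y)\,\mu(\d y)=0$, followed by the observation that a non-negative measurable function has zero integral over a set if and only if it vanishes $\mu$-a.e.\ there (the exceptional set being exactly $s(x)\cap A^c = s(x)\setminus A$), is the standard and essentially inevitable argument, and you correctly note the measurability of $s(x)$ via the joint measurability of $p$.
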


Finally, although in general the verification of the simplicity of a given set is not a decidable procedure, the following method can be applied.
Let us consider the sequence $(A_n)_{n\geq 0}$ defined in Lemma \ref{lem:A_n}.
If $A_n = \emptyset$ for some $n\in \N$,
then clearly $A$ is simple.
Although the definition itself requires a precise characterization of $u_n(\cdot;A)$,
only the computation of $P(x,\cdot)$ is needed in \eqref{eq:A_n},
instead of consecutive integral iterations over value functions.
Let us now introduce an approximate approach for the computation,
using the concepts in Section \ref{ssec:nest}:
leveraging the procedure in \eqref{eq:A_n},
we have that $A_0 = A$ and that $A_{n+1} = \P_{\geq 1}[\Next A_n]$. Let us select a precision level $\delta\in (0,1)$ and construct a sequence of supersatisfaction sets as follows:
\begin{equation*}
    A^*_{n+1} = \P_{\geq 1-\delta}[\Next A^*_n], \quad A^*_0 = A.
\end{equation*}
By construction, $A_n\subseteq A^*_n$ for all $n\geq 0$, thus $A$ is simple whenever $A^*_n = \emptyset$ for some $n\in \N$.
Notice that the conditions required to implement the procedure are very general.
Let us discuss its strong and weak points:
\begin{itemize}
    \item If the exact form of $P$ is given, then the sets $A_n$ can be characterized explicitly.
    The simplicity of $A$ is verified if the sequence $(A_n)_{n\geq 0}$ eventually contains only empty sets.
    On the other hand,
    if $A$ is non-simple, then the set $\las(A)$ can be found whenever $A_n = A_{n+1}\neq \emptyset$ for some $n\in \N$.
    Clearly, in such a situation it holds that $\las(A) = A_n$.
    Finally, if $A$ is non-simple, whenever $A_{n+1}$ is a strict subset of $A_n$, one can compute $\las(A) = A_\infty$ as an intersection of the sets $A_n$.
    \item If only an approximate characterization of $P$ is available,
    the simplicity of set $A$ can be verified for sufficiently small $\delta$ and sufficiently large $n$.
    However,
    it is not clear how big $n$ should be taken to ensure that $A^*_n=\emptyset$ for a given precision level $\delta$.
    Due to this reason,
    it is extremely important to have an \textit{a priori} upper-bound on $m(A)$, provided the latter is finite (cfr. the discussion on $m(A)$ for \dt-MC in Section \ref{ssec:integral}).
    Furthermore,
    the non-simplicity of set $A$ cannot be verified:
    because of the errors in the computation of $A^*_n$,
    the case $A_n = A_{n+1}\neq\emptyset$ cannot be exactly characterized.
\end{itemize}

We conclude the discussion in this section with the following practical observation:
in practice stochastic kernels for a \dt-MP either are extracted from finite data coming from measurement experiments,
or derived from some underlying analytical model.
In the latter case, the model gives an additional knowledge on the structure of a \dt-MP which can be further used along the lines discussed in this section to find the largest absorbing subset of a given set or to verify the simplicity of such set.
Conversely, when no underlying model is known and kernels are interpolated exclusively from measurements data, any kernel resulted via an interpolation technique can be negligibly perturbed in order to yield absence of absorbing subsets of given compact sets (see discussion after Proposition \ref{prop:cmp}).

\subsection{Connections with the literature}
\label{ssec:connections}
Let us comment on the overall connection between the results achieved in this paper and related ones from classical literature on \dt-MP \cite{mt1993,n1984,r1984,tt1979}.
These works deal with similar problems and study related objects:
for instance
%invariance value functions $u_n$ are a special case of ``taboo'' probabilities
the DP recursions for functions $u_n$ can be obtained from the equations on ``taboo'' probabilities
\cite{mt1993}
and the invariance operator $\i_A$ can be related to the operator $P_A$ used in \cite{mt1993}.
%For example, D\precursions for functions $u_n$ can be obtained from the equations on ``taboo'' probabilities.
However, the focus in this literature is not on the quantitative analysis of such objects,
but rather on the asymptotic behaviour of the underlying \dt-MP.
This literature deals for instance with the existence, uniqueness, and stability of invariant distributions:
although this is an important problem in the analysis of \dt-MP,
it does not allow for a direct connection with PCTL specifications.
Moreover, the strongest results in this literature are often obtained under the assumption of irreducibility,
which is both restrictive and hard to verify over a given \dt-MP.
Finally, these studies have not been concerned with computability issues,
so that the developed methods are rarely constructive:
for example, although the Doeblin decomposition of a compact set $A$ used in the proof of Theorem \ref{thm:Bellman-w.mod} may shed some light on the structure of $\las(A)$,
its construction is classically characterized by the infinite-horizon value functions \cite{tt1979},
which are here the objective of its use.

As a consequence of this discussion,
results in classical literature on \dt-MP do not appear to be directly applicable to the problems considered in this paper.
On the other hand, they may be useful in studying properties of absorbing sets that we showed are crucial for the approximate PCTL model-checking of general \dt-MP.
This connection represents a promising future direction of study, which however goes beyond the scope of the current contribution.

%%%%%%%%%%%%%%%%%%%%%%%%%%%%%%%%%%%%%%%%%%%%%%%%%%%%%%%%%%%%%%%%%%%%%%%%%%%%%%%%
\section{Case studies}\label{sec:cs}

%%%%%%%%%%%%%%%%%%%%%%%%%%%%%%%%%%%%%%%%%%%%%%%%%%%%%%%%%%%%%%%%%%%%%%%%%%%%%%%%
\subsection{A one-dimensional affine Gaussian system}\label{ssec:cs.1}

Let $X = \mathbb R$ be endowed with the standard topology and let $\b$ be its Borel $\sigma$-algebra.
Consider a sequence $(\xi_n)_{n\geq 0}$ of iid standard normal random variables and define a \dt-MP as
\begin{equation}\label{eq:example-dynamical}
   \x_{n+1} = (\alpha+\mu \x_n) + (\beta+\sigma \x_n)\cdot \xi_n,
\end{equation}
where $\alpha,\beta,\mu,\sigma \in \R$ are parameters and $\x_0 = x\in \R$.
In order to study the probabilistic invariance problem for this affine Gaussian model,
let us select a compact set $A$ in $\R$. Let us focus on how the structure of the dynamics are affected by the choice of the parameters.
In order to avoid trivial constant dynamics,
let us assume that at least one of the parameters $\alpha,\beta,\mu-1,\sigma$ is non-zero.
If $\beta+\sigma \x_n\neq 0$ the distribution of $\x_{n+1}$ admits the whole state space $\R$ as its support,
thus for $A$ to be non-simple it is necessary that point $\kappa:=-\frac{\beta}{\sigma} \in A$.
We then assume that $\sigma\neq 0$,
since clearly if $\sigma = 0$ any compact set is simple.
Moreover, for $A$ to be non-simple the state $\kappa$ has to be absorbing,
so from \eqref{eq:example-dynamical} it must hold that $\kappa = \alpha +\mu\kappa$,
so $\alpha = (1-\mu)\kappa$.
Since by Theorem \ref{thm:main} the solution of the invariance problem on simple sets is trivial,
we focus on the case when $\kappa$ is absorbing and select the parameters $\alpha:= (1-\mu)\kappa$, $\beta := -\sigma\kappa$, where
$\kappa \in \R$ is an arbitrary state. The update equation \eqref{eq:example-dynamical} takes the new form:
\begin{equation*}
    \x_{n+1} - \kappa = \mu(\x_k-\kappa) + \sigma(\x_k - \kappa)\xi_k,
\end{equation*}
and by applying a shift on $\kappa$,
without loss of generality we can focus on the following model:
\begin{equation}\label{eq:example-dynamical.upd}
   \x_{n+1} = \mu \x_n + \sigma \x_n\cdot \xi_n.
\end{equation}
In the latter equation $\sigma$ can be assumed to be positive,
since $\xi_n$ has a symmetric distribution.
The kernel associated to the \dt-MP \eqref{eq:example-dynamical.upd}
is weakly continuous and takes the following form:
\begin{equation*}
    P(x,A) =
    \begin{cases}
       \frac{1}{\sigma|x|\sqrt{2 P i}}\int_A \mathrm e^{-\frac{(t-\mu x)^2}{2(\sigma x)^2}}\,\d t&,\text{ if }x\neq 0,
          \\
       1_{A}(0)&,\text{ if }x = 0.
    \end{cases}
\end{equation*}
Since the compact set $A$ is non-simple if and only if $0\in A$,
let us consider the invariance problem for the set $A = [-1,1]$. The discussion above suggests that $\las(A) = \{0\}$,
so $u(0;A) = 1$.
For $x\neq 0$,
let us relate the original process $X$ to the random walk (see e.g. \cite[Chapter 4]{D04}.)
Define $Y_n:= \log|\x_n|$, so the update equation becomes:
\begin{equation*}
  Y_{n+1} = Y_n+\log |\mu+\sigma \xi_n|,
\end{equation*}
where $Y_0 = y := \log{|x|}$.
The expected value of the increment of the random walk
\begin{equation*}
    h(\mu,\sigma) := \mathsf E\log |\mu+\sigma \xi_1|
\end{equation*}
determines its asymptotic behavior.
In particular, $\limsup_{n\to\infty}Y_n = +\infty$ holds $ \pr_x$-a.s. in case $h(\mu,\sigma)\geq 0$ for all $x\neq 0$,
and $\lim_{n\to\infty}Y_n = -\infty$ holds $ \pr_x$-a.s. in case $h(\mu,\sigma)<0$ for all $x\neq 0$ \cite[Chapter 4]{D04}.
As a result, if the values of the parameters $\mu,\sigma$ are such that $h(\mu,\sigma)\geq 0$,
we obtain that, for any $x\neq 0$, the following holds:
\begin{equation*}
  u(x;A) =  \pr_x\left\{\sup\limits_{n\geq 0}|\x_n|\leq 1\right\} =  \pr_y\left\{\sup\limits_{n\geq 0}\log |\x_n|\leq 0\right\} = 0,
\end{equation*}
which allows to conclude that in this case $u(x;A) = 1_{\{0\}}(x)$.

We are left with the case $h(\mu,\sigma)<0$.
Since $P(\cdot,\{0\})$ is not a continuous function on $A$,
Assumption \ref{as:1} does not hold and thus Theorem \ref{thm:Bellman-w.mod} cannot be applied.
We then resort to Theorem \ref{thm:decomp},
which requires finding a $\delta$-locally excessive function.

Let us fix $\mu,\sigma$ and consider $g_q(x):=|x|^q$, for $q\geq 0$.
If we define $b(q) = \mathsf E|\mu+\sigma \xi_1|^q$,
then clearly $ P  g_q(x) = b(q)\cdot g_q(x)$, so $g_q$ is $\delta$-locally excessive if and only if $b(q)<1$.
We obtain $b(0) = 1$ and $b'(0) = h(\mu,\sigma)$.
Recall that we are now interested in the case $h(\mu,\sigma)<0$,
which leads to conclude that there always exists a $q>0$ such that the function $g_q(x) = |x|^q$ is $\delta$-locally excessive. Hence, for $h(\mu,\sigma)<0$ and such $q$, Theorem \ref{thm:decomp} can be applied to find the solution of the invariance problem using $g_q$ as a $1$-locally excessive function.
More precisely, according to \eqref{eq:decomp-Lyap} adapted to the special case of the invariance problem, we obtain:
\begin{equation*}
  0\leq w(x;A,(-\sqrt[q]{\ve},\sqrt[q]{\ve}))-u(x;A)\leq \ve,
\end{equation*}
and function $w(x;A,(-\sqrt[q]{\ve},\sqrt[q]{\ve}))$ can be computed, since $m(A\setminus (-\sqrt[q]{\ve},\sqrt[q]{\ve}))<\infty$ as it follows from Theorem \ref{thm:decomp}.

Finally, let us add a comment on the lack existence of a $\delta$-locally excessive function for a weakly continuous \dt-MP.
Consider the case in \eqref{eq:example-dynamical.upd} with parameters $h(\mu,\sigma)\geq 0$, so that $u(x;A) = 1_{\{0\}}(x)$.
If there existed a function $g$ that is $\delta$-locally excessive on $A = [-1,1]$ then
$
    u(x;A)\geq u(x;\{g<\delta\})\geq 1-\frac{g(x)}{\delta},
$
which implies that $u(x;A)>0$ in some neighborhood of $\{0\}$: this leads to a contradiction.

%%%%%%%%%%%%%%%%%%%%%%%%%%%%%%%%%%%%%%%%%%%%%%%%%%%%%%%%%%%%%%%%%%%%%%%%%%%%%%%%
\subsection{A two-dimensional non-linear Gaussian system}\label{ssec:cs.2}

Let us provide a more computational example for the application of the methods developed in this work.
Let $X = \R^2$ be endowed with the standard topology,
and consider a \dt-MP with dynamics given by the following system of non-linear difference equations:
\begin{equation}\label{eq:example-dynamical.2}
  \begin{cases}
    \x_{1,n+1} &= 0.5 \x_{2,n}(3\x_{1,n}^2  + 2X^2_{2,n}-0.5) + 0.6 \eta_n\sqrt{X^2_{1,n} + X^2_{2,n}},
    \\
    \x_{2,n+1} &= 0.9 \x_{2,n}(2\x_{1,n}^2 + 4\x_{1,k}\x_{2,n} +3X^2_{2,n}-0.5) + 0.6 \zeta_n\sqrt{X^2_{1,n} + X^2_{2,n}},
  \end{cases}
\end{equation}
where $(\x_{1,0},\x_{2,0}) = (x_1,x_2) = x$.
Here $(\eta_n)_{n\geq 0}$ and $(\zeta_k)_{k\geq 0}$ are independent sequences of iid standard normal random variables.

The process $\x$ is weakly continuous and its kernel can be expressed explicitly as in Section \ref{ssec:cs.1}.
Notice that the origin $\{0\}$ is the only bounded absorbing set.
We are interested in the solution of the infinite-horizon invariance problem over the compact set $A = [-0.6,0.6]\times [-0.6,0.6]$.
Again, $P(\cdot,\{0\})$ is not a continuous function on $A$ so that Assumption \ref{as:1} is not satisfied,
and thus Theorem \ref{thm:Bellman-w.mod} cannot be applied as discussed in Section \ref{ssec:cs.1}.
It is thus necessary to find a $\delta$-locally excessive function on $A$.
Let us start by discussing the behavior of the process $\x$ on the phase plane.
For $x$ far from the origin, the non-linear terms (appearing in brackets in \eqref{eq:example-dynamical.2}) play a more important role than the linear ones,
whereas for $x$ close to the origin the situation is reversed.
We then expect that a function measuring the distance from the origin may be locally excessive.
For this reason, we consider $g(x) = \|x\|^2$, which leads to the following:
\begin{equation*}
    \begin{split}
         P  g(x_1,x_2) &= \frac{1}{200}(144x_1^2+197x_2^2-474x_1^2x_2^2+1098x_1^4x_2^2-648x_1x_2^3)\\&+\frac{1}{200}(2592x_1^3x_2^3-586x_2^4+5136x_1^2x_2^4+3888x_1x_2^5+1658x_2^6).
    \end{split}
\end{equation*}
It holds that $\{g<0.25\}\subseteq \e_g$,
hence $g$ is $\delta$-locally excessive on $A$, with $\delta = 0.25$.
Figure \ref{fig:cs1} shows sets $\{g<0.25\}, \e_g$, and $A$.
Set $A$ intersects $\e^c_g$,
which can be interpreted as follows:
starting from a state $x\in A\cap \e_g$,
process $\x$ exhibits contractive dynamics,
whereas for $x\in A\cap \e_g^c$ the difference $ P  g(x)-g(x)$ is positive and gets larger as $\|x\|$ grows,
hence trajectories initialized in $x\in A\cap \e_g^c$ expand away from the origin.
Based on this consideration we expect clear differences between the values of function $u(x;A)$ for $x\in A\cap\e_g$ and $x\in A\cap\e_g^c$.
\begin{figure*}[ht]
    \centering
        {\psfrag{aa}{$x_1$}\psfrag{bb}{$x_2$}
        \includegraphics[keepaspectratio=true,width=11cm]{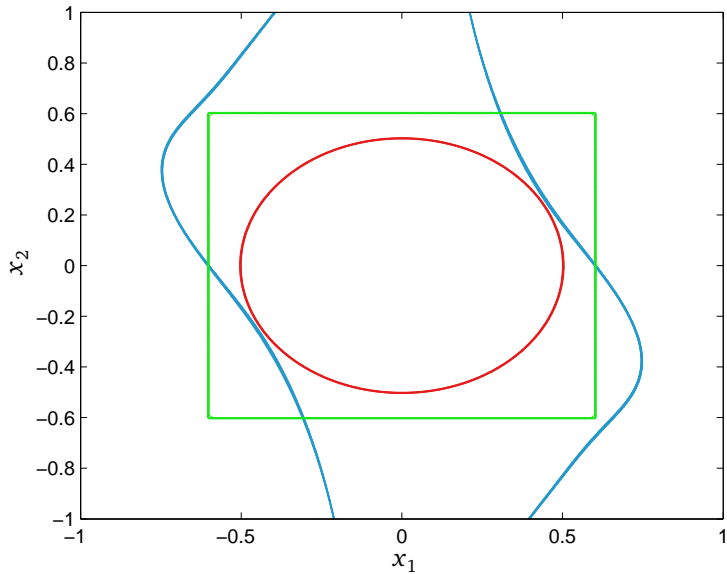}
        }
    \caption{Infinite-horizon invariance problem over set $A$.
    The boundaries of sets $\e_g$ (dark blue curves), $A$ (green square) and $\{g<0.25\}$ (red circle).}
\label{fig:cs1}
\end{figure*}

We apply the decomposition technique in Theorem \ref{thm:decomp},
where by selecting an $\varepsilon = 0.02$ we obtain that $0\leq w(x;A,\{g<5\cdot 10^{-3}\})-u(x;A)\leq 0.02$.
To simplify the calculations,
we consider the function $w(x;A,B)$ for
\begin{equation*}
  B = (-0.05.0.05)\times(-0.05,0.05)\subset \{g<5\cdot 10^{-3}\}
\end{equation*}
and as a result we have $0\leq w(x;A,B)-u(x;A) \leq 0.02$.
To compute the values of function $w$ we use the bounds provided in Proposition \ref{prop:cmp}: in this case $m(A\setminus B) = 1$ and $\rho(A\setminus B) \approx 0.957$, so by considering $n=50$ iterations we obtain
\begin{equation*}
  0\leq w(x;A,B)-w_n(x;A,B) \leq 0.112.
\end{equation*}

Thus far, the methods developed in this paper (in particular Theorem \ref{thm:decomp}),
have allowed us to reduce the infinite-horizon invariance problem over a non-simple set to a finite-horizon reach-avoid problem.
Let us now mention how the value function corresponding to the latter problem can be computed.
The calculation of the value function $w_n$ is performed with a target error $0.1$,
which is achieved by employing a standard uniform discretization algorithm \cite{aklp2010} -- thus the resulting overall error equals to $0.232$.
Based on the time horizon of the problem, and due to the degenerate nature of the kernel in the neighborhood of the origin, and the fine size selected by the partitioning procedure to achieve the small required precision, the computation took $24$ hours on Intel Core i5, 2.4 GHz with 4Gb RAM.
This computational time can be further reduced by leveraging more involved numerical procedures \cite{sa2011},
which however are outside of the scope of this study.

\begin{figure*}[ht]
    \centering
        \subfigure[Local excessivity of $g$ on the set $A$]{
        {\psfrag{d}{$x_1$}\psfrag{e}{$x_2$}\psfrag{f}{$Pg-g$}
        \includegraphics[keepaspectratio=true,width=1\textwidth]{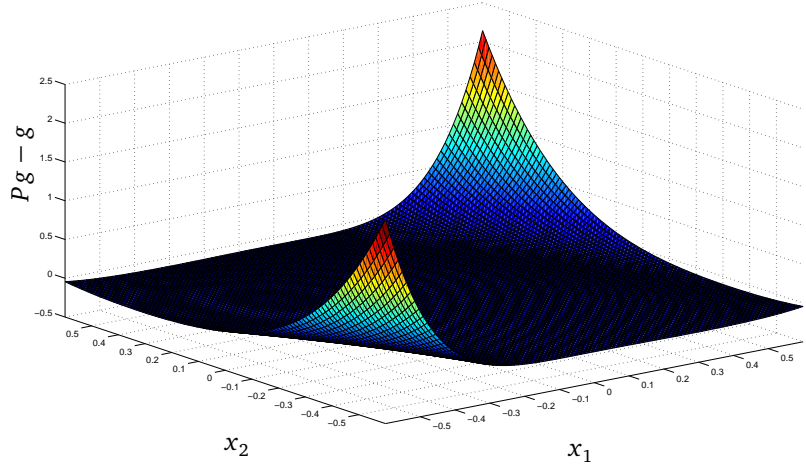}
        }
        }
        \subfigure[Invariance value function]{
        {\psfrag{a}{$x_1$}\psfrag{b}{$x_2$}\psfrag{c}{$u$}
        \includegraphics[keepaspectratio=true,width=1\textwidth]{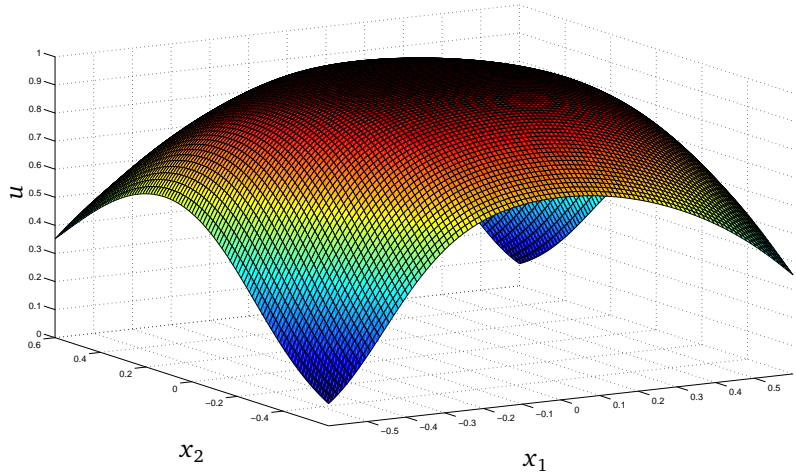}}
        }
    \caption{Results for the infinite-horizon invariance problem on the set $A$.
    Graphs of functions $ P  g-g$ (a) and $u$ (b).}\label{fig:cs}
\end{figure*}

The first goal of this case study was to show that infinite-horizon problems can be solved efficiently, with strict bounds on the error, even in the case of nonlinear dynamics and kernels which admit non-trivial absorbing sets.
The use of the decomposition technique has also allowed us to avoid computations over the neighborhood of the absorbing set $(0,0)$ where the kernel $P$ degenerates.
In particular, it is important for numerical methods based on the discretization of the state space,
since their error bounds depends on Lipschitz constants of densities.
Moreover, with this approach the error of computation can be made as small as needed by varying the error $\ve$ related to the decomposition,
the number $n$ of iterations for the reach-avoid problem,
and the grid size for the discretization.

As already mentioned, the choice of the set $A$ with regards to the excessive region plays an important role. On Figure \ref{fig:cs}(a) one can observe large positive values of $ P  g(x)-g(x)$ for $x$ close to points $(-0.6,0.6)$ or $(0.6,0.6)$. We expect a diverging behavior of $\x$ when starting in that region. This fact is clearly shown on Figure \ref{fig:cs}(b) where the invariance value function $u$ takes the smallest values exactly in that region.

%%%%%%%%%%%%%%%%%%%%%%%%%%%%%%%%%%%%%%%%%%%%%%%%%%%%%%%%%%%%%%%%%%%%%%%%%%%%%%%%
\section{Conclusions}\label{sec:concl}

This work has provided a general framework for the study of formal algorithms for PCTL verification of discrete-time Markov processes over general state spaces.
The main focus of the article
has been placed on the verification of infinite-horizon PCTL specifications,
both in terms of characterization of the given PCTL formula and in terms of precise numerical computation of the corresponding value function.
It has been shown that structural properties of the stochastic kernel,
namely the possible presence of absorbing subsets of given sets,
are crucial for problems over the infinite horizon.
In particular, the solution of the invariance is either trivial (on simple sets) or extremely complicated (on non-simple sets).
This has lead to criteria to distinguish such instances and to techniques to tackle the latter case -- these techniques have been illustrated by two case studies.

The outcome of this work is that infinite-horizon problems cannot in general be solved exactly or algorithmically.
However, precise reduction of these problems to finite-horizon analogues allows tapping on techniques for the latter, thus inheriting their scalability.
This leads to an emphasis on the verification of the simplicity of a given set and on the development of procedures to find $\delta$-locally excessive functions.

These questions set compelling goals to the authors and are to be further pursued in future work,
along with the application of the developed methods to other classes of specifications (beyond PCTL).
Furthermore, extensions to continuous-time and control-dependent models are also deemed research worthy.

%%%%%%%%%%%%%%%%%%%%%%%%%%%%%%%%%%%%%%%%%%%%%%%%%%%%%%%%%%%%%%%%%%%%%%%%%%%%%%%%

\bibliographystyle{amsalpha}
\bibliography{../../../my_bib}

\providecommand{\bysame}{\leavevmode\hbox to3em{\hrulefill}\thinspace}
\providecommand{\MR}{\relax\ifhmode\unskip\space\fi MR }
% \MRhref is called by the amsart/book/proc definition of \MR.
\providecommand{\MRhref}[2]{%
  \href{http://www.ams.org/mathscinet-getitem?mr=#1}{#2}
}
\providecommand{\href}[2]{#2}
\begin{thebibliography}{HKNP06}

\bibitem[AKLP10]{aklp2010}
A.~Abate, J.-P. Katoen, J.~Lygeros, and M.~Prandini, \emph{Approximate model
  checking of stochastic hybrid systems}, European Journal of Control
  \textbf{16} (2010), 624--641.

\bibitem[AKM11]{akm2011}
A.~Abate, J.-P. Katoen, and A.~Mereacre, \emph{Quantitative automata model
  checking of autonomous stochastic hybrid systems}, Proceedings of the 14th
  international conference on Hybrid Systems: Computation and Control (New
  York, NY, USA), HSCC '11, ACM, 2011, pp.~83--92.

\bibitem[APLS08]{APLS08b}
A.~Abate, M.~Prandini, J.~Lygeros, and S.~Sastry, \emph{Probabilistic
  reachability and safety for controlled discrete time stochastic hybrid
  systems}, Automatica \textbf{44} (2008), no.~11, 2724--2734.

\bibitem[BK08]{bk2008}
C.~Baier and J.-P. Katoen, \emph{Principles of model checking}, The MIT Press,
  2008.

\bibitem[BS78]{bs1978}
D.~P. Bertsekas and S.~E. Shreve, \emph{Stochastic optimal control: The
  discrete time case}, vol. 139, Academic Press, 1978.

\bibitem[Dur04]{D04}
R.~Durrett, \emph{Probability: Theory and examples - third edition}, Duxbury
  Press, 2004.

\bibitem[FKNP11]{fknp2011}
V.~Forejt, M.~Kwiatkowska, G.~Norman, and D.~Parker, \emph{Automated
  verification techniques for probabilistic systems}, Formal Methods for
  Eternal Networked Software Systems (M.~Bernardo and V.~Issarny, eds.),
  Lecture Notes in Computer Science, vol. 6659, Springer Berlin Heidelberg,
  2011, pp.~53--113.

\bibitem[FLW06]{flw2006}
H.~Fecher, M.~Leucker, and V.~Wolf, \emph{Don't know in probabilistic systems},
  Proceedings of the 13th international conference on Model Checking Software
  (Berlin, Heidelberg), SPIN'06, Springer-Verlag, 2006, pp.~71--88.

\bibitem[HKNP06]{HKNP06}
A.~Hinton, M.~Kwiatkowska, G.~Norman, and D.~Parker, \emph{{PRISM}: A tool for
  automatic verification of probabilistic systems}, Tools and Algorithms for
  the Construction and Analysis of Systems (H.~Hermanns and J.~Palsberg, eds.),
  Lecture Notes in Computer Science, vol. 3920, Springer Verlag, 2006,
  pp.~441--444.

\bibitem[HL89]{hl1989}
O.~Hern{\'a}ndez-Lerma, \emph{Adaptive {M}arkov control processes}, Applied
  Mathematical Sciences, vol.~79, Springer-Verlag, New York, 1989.

\bibitem[HLL96]{hll1996}
O.~Hern{\'a}ndez-Lerma and J.~B. Lasserre, \emph{Discrete-time {M}arkov control
  processes}, Applications of Mathematics (New York), vol.~30, Springer Verlag,
  New York, 1996.

\bibitem[HPW09]{hpw2009}
M.~Huth, N.~Piterman, and D.~Wagner, \emph{Three-valued abstractions of
  {M}arkov chains: completeness for a sizeable fragment of {PCTL}}, Proceedings
  of the 17th international conference on Fundamentals of computation theory
  (Berlin, Heidelberg), FCT'09, Springer-Verlag, 2009, pp.~205--216.

\bibitem[Hut05]{h2005}
M.~Huth, \emph{On finite-state approximants for probabilistic computation tree
  logic}, Theoretical Computer Science \textbf{346} (2005), no.~1, 113--134.

\bibitem[Kal97]{k1997a}
O.~Kallenberg, \emph{Foundations of modern probability}, Probability and its
  Applications, Springer-Verlag, New York, 1997.

\bibitem[KKZ05]{kkz2005}
J.-P. Katoen, M.~Khattri, and I.~S. Zapreev, \emph{A {M}arkov reward model
  checker}, Proceedings of the Second International Conference on the
  Quantitative Evaluation of Systems (Washington, DC, USA), QEST '05, IEEE
  Computer Society, 2005, pp.~243--244.

\bibitem[Kus67]{k1967}
H.~J. Kushner, \emph{Stochastic stability and control}, vol.~33, Academic
  Press, 1967.

\bibitem[Mey08]{m2008}
S.P. Meyn, \emph{Control techniques for complex networks}, Cambridge University
  Press, 2008.

\bibitem[MT93]{mt1993}
S.~P. Meyn and R.~L. Tweedie, \emph{Markov chains and stochastic stability},
  Communications and Control Engineering Series, Springer-Verlag London Ltd.,
  London, 1993.

\bibitem[Num84]{n1984}
E.~Nummelin, \emph{General irreducible markov chains and non-negative
  operators}, Cambridge University Press, Cambridge, 1984.

\bibitem[PS06]{ps2006}
G.~Pe{\v{s}}kir and A.~Shiryaev, \emph{Optimal stopping and free-boundary
  problems}, Birkhauser, 2006.

\bibitem[RCSL10]{rcsl2010}
F.~Ramponi, D.~Chatterjee, S.~Summers, and J.~Lygeros, \emph{On the connections
  between {PCTL} and dynamic programming}, Proceedings of the 13th {ACM}
  international conference on {H}ybrid {S}ystems: {C}omputation and {C}ontrol,
  2010, pp.~253--262.

\bibitem[Rev84]{r1984}
D.~Revuz, \emph{Markov chains}, second ed., North-Holland Publishing,
  Amsterdam, 1984.

\bibitem[Rud76]{r1976}
W.~Rudin, \emph{Principles of mathematical analysis}, vol. 275, McGraw-Hill New
  York, 1976.

\bibitem[Rud87]{r1987}
\bysame, \emph{Real and complex analysis}, third ed., McGraw-Hill Book Co., New
  York, 1987.

\bibitem[SA11]{sa2011}
S.~Soudjani and A.~Abate, \emph{Adaptive gridding for abstraction and
  verification of stochastic hybrid systems}, Proceedings of the 2011 Eighth
  International Conference on Quantitative Evaluation of SysTems (Washington,
  DC, USA), QEST '11, IEEE Computer Society, 2011, pp.~59--68.

\bibitem[SL10]{SL10}
S.~Summers and J.~Lygeros, \emph{Verification of discrete time stochastic
  hybrid systems: A stochastic reach-avoid decision problem}, Automatica
  \textbf{46} (2010), no.~12, 1951--1961.

\bibitem[SRG08]{s2008}
A.~Shiryaev, B.~Rozovskii, and G.~Grimmett, \emph{Optimal stopping rules},
  Springer Verlag Berlin Heidelberg, 2008.

\bibitem[TA11]{ta2011}
I.~Tkachev and A.~Abate, \emph{On infinite-horizon probabilistic properties and
  stochastic bisimulation functions}, 2011 50th IEEE Conference on Decision and
  Control and European Control Conference (CDC-ECC), 2011, pp.~526--531.

\bibitem[TA12]{ta2012}
\bysame, \emph{Regularization of {B}ellman equations for infinite-horizon
  probabilistic properties}, Proceedings of the 15th ACM international
  conference on Hybrid Systems: Computation and Control (New York, NY, USA),
  HSCC '12, ACM, 2012, pp.~227--236.

\bibitem[Tab09]{t2009}
P.~Tabuada, \emph{Verification and control of hybrid systems: A symbolic
  approach}, Springer Verlag, New York, 2009.

\bibitem[TT79]{tt1979}
P.~Tuominen and R.L. Tweedie, \emph{Markov chains with continuous components},
  Proc. London Math. Soc. (3) \textbf{38} (1979), no.~1, 89--114.

\end{thebibliography}

%%%%%%%%%%%%%%%%%%%%%%%%%%%%%%%%%%%%%%%%%%%%%%%%%%%%%%%%%%%%%%%%%%%%%%%%%%%%%%%%
\section{Appendix}\label{sec:appendix}

Theorem \ref{thm:main} requires the compactness of the set $A$ and the weak continuity of the kernel $P$, however some of the relations between statements in this theorem are true in the general case as it is shown in Figure \ref{fig:relax}.
\begin{figure}[h!]
    \begin{center}
    \label{fig:relax}
        \begin{tikzpicture}[>=triangle 60,every node/.style={draw,circle,minimum width={2em},node distance=4em}]
            \node (a) {5};
            \node [below left of=a] (b) {2};
            \node [left of=b] (d) {1};
            \node [below right of=a] (c) {3};
            \node [right of=c] (e) {4};
            \draw [->] (b) -- (a);
            \draw [->] (c) -- (a);
            \draw [<-] (c) -- (b);
            \draw [<->] (b) -- (d);
            \draw [<->] (c) -- (e);
        \end{tikzpicture}
        \caption{Generalization of the relations between statements of Theorem \ref{thm:main}.}
    \end{center}
\end{figure}
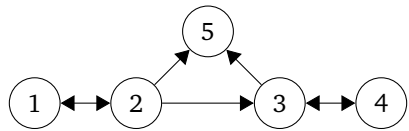
First of all, for the pair \textit{1)} $\Leftrightarrow$ \textit{2)} it is clear that \textit{2)} is a stronger statement in general. Moreover, from the proof of Theorem \ref{thm:main} it clearly follows that \textit{1)} $\Rightarrow$ \textit{2)} without any assumptions on $A$ and $P$. For \textit{3)} $\Leftrightarrow$ \textit{4)} the following holds:

\begin{proposition}\footnote{This proposition generalizes a result from \cite[Proposition 9]{rcsl2010},
where the trivial invariance was shown to be sufficient for the uniqueness over a smaller class of functions.}\label{prop:uniqueness}
  Equation \eqref{eq:Bellman-u} admits a unique solution if and only if $u(\cdot;A)\equiv 0$.
\end{proposition}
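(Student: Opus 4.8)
The plan is to prove the two implications separately, exploiting that \eqref{eq:Bellman-u} is a \emph{homogeneous} linear equation $f=\i_A f$. Two observations frame everything: the trivial function $f\equiv 0$ is always a solution (by linearity of $\i_A$), and $u(\cdot;A)$ is itself a solution, as established by the DP recursion \eqref{eq:DP-u} and the passage to the limit $u_n\to u$. The forward implication is then essentially free: if \eqref{eq:Bellman-u} admits a \emph{unique} solution, then since both $0$ and $u(\cdot;A)$ solve it, uniqueness immediately forces $u(\cdot;A)\equiv 0$.

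The substantive direction is the converse. Assuming $u(\cdot;A)\equiv 0$, I would take an arbitrary solution $f\in\B$ of $f=\i_A f$ and show $f\equiv 0$, which settles uniqueness. First, iterating the fixpoint equation gives $f=\i_A^n f$ for every $n\geq 0$. Since $\i_A g(x)=1_A(x)\int_X g(y)\,P(x,\d y)$ is an integral against a probability measure, $\i_A$ is a positive linear operator and satisfies $|\i_A g|\leq \i_A|g|$ pointwise, whence $|\i_A^n f|\leq \i_A^n|f|$. Next, any solution of $f=\i_A f$ vanishes off $A$ (for $x\notin A$ one has $\i_A f(x)=0$), so $|f|\leq \|f\|\,1_A$; applying the monotone operator $\i_A^n$ and recalling $u_n=\i_A^n 1_A$ from \eqref{eq:DP-u} yields
\[
  |f(x)| = |\i_A^n f(x)| \leq \|f\|\,\i_A^n 1_A(x) = \|f\|\,u_n(x;A)
\]
for all $x\in X$ and all $n$. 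Letting $n\to\infty$ and invoking the pointwise (monotone) convergence $u_n(\cdot;A)\to u(\cdot;A)\equiv 0$ recalled above \eqref{eq:Bellman-u} gives $|f(x)|\leq \|f\|\,u(x;A)=0$, so $f\equiv 0$.

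The point requiring the most care is the passage to the limit, but it is mild: the bound holds for every fixed $n$ and $u_n\to u$ only pointwise, which is exactly what is available, so no uniformity, contractivity, or topological hypothesis on $A$ or $P$ is invoked. This is consistent with the placement of the proposition outside the assumptions of Theorem \ref{thm:main}: the only conceptual ingredient is that triviality of the value function $u$ forces every fixed point of $\i_A$ to be dominated in absolute value by $\|f\|\,u_n\to 0$. I expect the main (modest) obstacle to be formulating the positivity estimate $|\i_A^n f|\leq\|f\|\,u_n$ cleanly rather than any genuine analytic difficulty.
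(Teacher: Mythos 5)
Your proof is correct, and the converse direction takes a genuinely different route from the paper's. The paper argues by contradiction: given a nonzero solution $f$, it normalizes to $\tilde f = f/\|f\|$, invokes the extremal fixed-point characterization recalled in Remark \ref{rem:least-fixpoint} (namely that $u(\cdot;A)$ is the largest solution of \eqref{eq:Bellman-u} not exceeding $1$, a fact imported from \cite[Lemma 6]{rcsl2010}) to conclude $\tilde f\leq u\equiv 0$, applies the same to $-\tilde f$ to force $\tilde f=0$, and contradicts $\|\tilde f\|=1$. You instead give a direct, self-contained domination argument: iterating $f=\i_A^n f$, using positivity and the support property $|f|\leq\|f\|\,1_A$, and identifying $\i_A^n 1_A = u_n(\cdot;A)$ from \eqref{eq:DP-u}, you obtain $|f|\leq\|f\|\,u_n(\cdot;A)\to 0$ pointwise. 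Every step checks out (the forward direction is the same one-liner in both). What your version buys is independence from the least-fixed-point lemma of \cite{rcsl2010} — in effect you reprove the relevant half of that extremal characterization inline — and it makes transparent exactly where the pointwise convergence $u_n\to u$ enters; what the paper's version buys is brevity, since the heavy lifting is delegated to an already-stated remark. Both proofs correctly avoid any compactness or continuity hypothesis, consistent with the proposition's placement in the appendix outside the scope of Theorem \ref{thm:main}.
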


\begin{proof}
  Equation \eqref{eq:Bellman-u} is linear, so if its solution is unique it is the trivial zero solution. Since $u(\cdot;A)$ is one of solutions, $u(\cdot;A)\equiv 0$.

  Conversely, let us suppose that $u(\cdot;A)\equiv 0$ and let $f\in \B$ be any other solution of \eqref{eq:Bellman-u}, so that $\|f\|>0$. Clearly, the function $\tilde f:= \frac{f}{\|f\|}$ is also a solution of this equation and $\tilde f\leq 1$. As a result, it holds that $\tilde f\leq u = 0$ (see Remark \ref{rem:least-fixpoint}) so that $\tilde f\leq 0$. On the other hand, $-\tilde f$ is also a solution of \eqref{eq:Bellman-u} due to the linearity of the equation and $-\tilde f\leq u\equiv 0$ which leads to $\tilde f = 0$. However, we have $\|\tilde f\| = 1$ by definition, hence we come to a contradiction.
\end{proof}

Now we only left to discuss relations between \textit{2)}, \textit{3)} and \textit{5)}. From contraction mapping theorem it follows that \textit{2)} $\Rightarrow$ \textit{3)}. Moreover, if $u(\cdot;A)\equiv 0$ then $A$ is simple since $\las(A) = \{u(\cdot;A) = 1\}$ is empty in this case,
so that \textit{3)} $\Rightarrow$ \textit{5)}.
As a result, all the relations in Figure \ref{fig:relax} are true.
On the other hand,
examples below show that if either $A$ is not compact,
or $P$ is not weakly continuous,
then converse relations between \textit{2)}, \textit{3)} and \textit{5)} do not hold true.

\smallskip

We first show that the weak continuity is not sufficient without the compactness.

\textbf{1.} Let us show that  \textit{3)}$+$\textit{5)} $\nRightarrow$ \textit{2)}. Consider an example from Section \ref{ssec:cs.1} given by the equation \eqref{eq:example-dynamical.upd} with $\mu = 0$ and $h(0,\sigma)\geq 0$. Let us choose the set $A = [-1,1]$ so as it has been proved, $u(x;A) = 1_{\{0\}}(x)$. Let us put $\tilde A = A\setminus \{0\}$, i.e. it is not compact. By induction it can be proved that $u_n(x;A) - u_n(x;\tilde A) = 1_{\{0\}}(x)$ for all $n\geq 0$ since it holds for $n = 0$ and
\begin{equation*}
    \begin{split}
    u_{n+1}(x;A) - u_{n+1}(x;{\tilde A}) &= 1_A(x)\int_X u_{n}(y;A)P(x,\d y)-1_{\tilde A}(x)\int_X u_{n}(y;{\tilde A})P(x,\d y)
    \\
        &=1_{\{0\}}(x) +1_{\tilde A}(x)\int_{\{0\}}P(x,\d y) = 1_{\{0\}}(x).
    \end{split}
\end{equation*}
Note, however that if $f\in \B$ is continuous on $A = [-1,1]$ so is $\i_A f$ due to the structure of the kernel $P$. As a result, functions $u_n(\cdot;A)$ are continuous on $A$ and since $u_n(0;A) = 1$ for all $n\geq 0$ it holds that $\|u_n(\cdot;\tilde A)\| = \|u_n(\cdot;A) - 1_{\{0\}}(\cdot)\| = 1$. Although the set $\tilde A$ is simple, its invariance value function $u(\cdot;\tilde A)\equiv 0$ and the uniqueness for the solution of \eqref{eq:Bellman-u} holds, we have $m(A) = \infty$ which proves that \textit{3)}$+$\textit{5)} $\nRightarrow$ \textit{2)} in general.

\textbf{2.} Let us show that \textit{5)} $\nRightarrow$ \textit{3)}. Following the same lines as above, we consider \eqref{eq:example-dynamical.upd} with $\mu = 0$ and $h(0,\sigma)<0$. As it has been discussed in Section \ref{ssec:cs.1} the function $u(x;A)$ for $A = [-1,1]$ is positive in the neighborhood of $0$. Still, it holds that $u(x;\tilde A) = u(x;A) - 1_{\{0\}}(x)$ for $\tilde A = A\setminus \{0\}$ so the invariance value function for the simple non-compact set $\tilde A$ is non-trivial and hence the solution of \eqref{eq:Bellman-u} is not unique.

\smallskip

Let us now show that if $A$ is compact but the weak continuity assumption on $P$ is relaxed then \textit{3)}$+$\textit{5)} $\nRightarrow$ \textit{2)} and \textit{5)} $\nRightarrow$ \textit{3)}. To do this, one should make similar considerations as in \textbf{1.} an \textbf{2.} for the same kernels, just redefining $P(0,\{2\}) = 1$.
\end{document}